\DeclareAcronym{hccf}{
  short = HCCF,
  long = hyperbolic controller canonical form
}
\DeclareAcronym{hocf}{
  short = HOCF,
  long = hyperbolic observer canonical form
}
\DeclareAcronym{bcs}{
  short = BCS,
  long = boundary control system 
}
\DeclareAcronym{ndde}{
  short = NFDE,
  long = neutral functional differential equation
}
\DeclareAcronym{ac}{
  short = AC,
  long = analytic case
}
\DeclareAcronym{hc}{
  short = HC,
  long = hyperbolic case
}
\renewcommand{\star}{\diamond}
\newcommand{\SysOp}{\Sigma}
\newcommand{\SysOpBcs}[1][]{\SysOp_{\text{bcs}#1}}
\newcommand{\SysOpBos}[1][]{\SysOp_{\text{bos}#1}}
\newcommand{\EA}{\mathcal{A}}
\newcommand{\EAcl}{\EA^{\text{cl}}}
\newcommand{\EB}{\mathcal{B}}
\newcommand{\EC}{\mathcal{C}}
\newcommand{\EK}{\mathcal{K}}
\newcommand{\EL}{\mathcal{L}}
\newcommand{\BA}{\mathfrak{A}}
\newcommand{\BB}{\mathfrak{B}}
\newcommand{\BC}{\mathfrak{C}}
\newcommand{\BK}{\mathfrak{K}}
\newcommand{\BR}{\mathfrak{R}}
\newcommand{\BG}{\varkappa}
\newcommand{\OA}{\mathscr{A}}
\newcommand{\OK}{\mathscr{K}}
\newcommand{\OR}{\mathscr{R}}
\newcommand{\MA}{A}
\newcommand{\MB}{B}
\newcommand{\MK}{K}
\newcommand{\mb}{b}
\newcommand{\mc}{c}
\newcommand{\OKc}{\breve{\OK}}
\newcommand{\BKc}{\breve{\BK}}
\newcommand{\Kc}{\breve{\EK}}
\newcommand{\BKu}{\mathring{\BK}}
\newcommand{\OKu}{\mathring{\OK}}
\newcommand{\Kb}{\EK}
\newcommand{\Lc}{\breve{\EL}}
\newcommand{\Lu}{\mathring{\EL}}
\newcommand{\Lb}{\EL}
\newcommand{\ctrlIsText}{\text{\rm{c}}}
\newcommand{\obsIsText}{\text{\rm{o}}}
\newcommand{\desSText}{\text{\rm{d}}}
\newcommand{\desCsText}{\text{\rm{dc}}}
\newcommand{\desOsText}{\text{\rm{do}}}
\newcommand{\clSText}{\text{\rm{cl}}}
\newcommand{\ctrlIs}[2][]{#2^{\ctrlIsText #1}}
\newcommand{\obsIs}[2][]{#2^{\obsIsText #1}}
\newcommand{\desS}[2][]{#2^{\desSText #1}}
\newcommand{\desCs}[2][]{#2^{\desCsText #1}}
\newcommand{\desOs}[2][]{#2^{\desOsText #1}}
\newcommand{\clS}[2][]{#2^{\clSText #1}}
\newcommand{\Ss}{\mathcal{X}}
\newcommand{\Is}{\mathcal{U}}
\newcommand{\Os}{\mathcal{Y}}
\newcommand{\Nats}{\mathbb{N}}
\newcommand{\Integers}{\mathbb{Z}}
\newcommand{\Reels}{\mathbb{R}}
\newcommand{\Compl}{\mathbb{C}}
\newcommand{\LOp}{\mathscr{L}}
\newcommand{\Lz}{L^{2}}
\newcommand{\lz}{l^{2}}
\newcommand{\li}{l^{\infty}}
\newcommand{\Hn}[1]{H^{#1}}
\newcommand{\eval}{\lambda}
\newcommand{\evec}{\varphi}
\newcommand{\svec}{\psi}
\newcommand{\dual}[3][]{\langle #2 , #3 \rangle_{D(\EA^*_{#1})}}
\newcommand{\dualf}[3]{\langle #1 , #2 \rangle_{D(#3)}}
\newcommand{\dualc}[2]{\langle #1 , #2 \rangle_{D(\ctrlIs[*]{\EA})}}
\newcommand{\dualo}[3][]{\langle #2 , #3 \rangle_{D(\obsIs[*]{\EA}_{#1})}}
\newcommand{\dualdc}[2]{\langle #1 , #2 \rangle_{D(\desCs[*]{\EA})}}
\newcommand{\scal}[2]{\langle #1 , #2 \rangle_{\Ss}}
\newcommand{\scalf}[3]{\langle #1 , #2 \rangle_{#3}}
\newcommand{\scale}[2]{\langle #1 , #2 \rangle_{\Ss_\scc}}
\newcommand{\s}{x}
\newcommand{\ms}{p}
\newcommand{\uu}{u}
\newcommand{\yy}{y}
\newcommand{\sh}{\hat{\s}}
\newcommand{\yh}{\hat{\yy}}
\newcommand{\st}{\tilde{\s}}
\newcommand{\yt}{\tilde{\yy}}
\newcommand{\sw}{w}
\newcommand{\ca}{\alpha}
\newcommand{\cb}{\beta}
\newcommand{\cc}{\gamma}
\newcommand{\kappac}{\kappa_{\text{c}}}
\newcommand{\kappao}{\kappa_{\text{o}}}
\newcommand{\muc}{\mu_{\text{c}}}
\newcommand{\muo}{\mu_{\text{o}}}
\newcommand{\scc}{\eta}
\newcommand{\soc}{\xi}
\newcommand{\sfc}{\nu}
\newcommand{\fo}{\chi}
\newcommand{\foa}{\zeta}
\newcommand{\tho}{\theta_{+}}
\newcommand{\thu}{\theta_{-}}
\newcommand{\acc}{a}
\newcommand{\accb}{\tilde{a}}
\newcommand{\accu}{\mathring{a}}
\newcommand{\Trafo}{T}
\newcommand{\diff}[1]{\partial_{#1}}
\newcommand{\CN}{$\text{C}_{0}$}
\renewcommand{\Re}{\text{Re}}
\newcommand{\SG}{\mathcal{T}}
\newcommand{\djimin}{d_{j,i}^{\text{min}}}
\newcommand{\djimax}{d_{j,i}^{\text{max}}}
\newcommand{\Transpose}{\mathsf{T}}
\newcommand{\cconj}[1]{\overline{#1}}
\newcommand{\Dirac}[1]{\delta_{#1}}
\newcommand{\uubar}[1]{\underaccent{\bar}{#1}}
\newcommand{\adverseComp}[1]{\uubar{#1}}
\newtheorem{thm}{Theorem}[section]
\newtheorem{lem}[thm]{Lemma}
\newtheorem{rem}[thm]{Remark}
\newtheorem{defn}[thm]{Definition}
\newtheorem{assum}[thm]{Assumption}
\def\BibTeX{{\rm B\kern-.05em{\sc i\kern-.025em b}\kern-.08em
    T\kern-.1667em\lower.7ex\hbox{E}\kern-.125emX}}
\begin{document}
\title{Late lumping of transformation-based feedback laws for boundary control systems}

\author{Marcus Riesmeier, Frank Woittennek
\thanks{This work has been submitted to the IEEE for possible publication. Copyright may be transferred without notice, after which this version may no longer be accessible.}
\thanks{Marcus Riesmeier and Frank Woittennek are with UMIT Tirol, Private University for Health Sciences and Technology, Eduard-Wallnöfer-Zentrum 1, 6060 Hall in Tirol, Austria (e-mail: \{marcus.riesmeier, frank.woittennek\}@umit-tirol.at).}}

\maketitle
\begin{abstract}
Late-lumping feedback design for infinite-dimensional linear systems with unbounded input operators is considered. The proposed scheme is suitable for the approximation of backstepping and flatness-based designs and relies on a decomposition of the feedback into a bounded and an unbounded part. Approximation applies to the bounded part only, while the unbounded part is assumed to allow for an exact realization. Based on spectral results, the convergence of the closed-loop dynamics to the desired dynamics is established. By duality, similar results apply to the approximation of the observer output-injection gains for systems with boundary observation. The proposed design and approximation steps are demonstrated and illustrated based on a hyperbolic infinite-dimensional system.
\end{abstract}

\begin{IEEEkeywords}
Distributed parameter systems, Linear systems, Stability of linear systems, Late lumping 
\end{IEEEkeywords}

\section{Introduction}
\label{sec:intro}
For the control and observer synthesis for infinite-dimensional systems, there exist different
paradigms. Among these, so-called early-lumping designs, see for example~\cite{Harkort2014}, are certainly the 
most popular in practice. Thereby, the controller and the observer are designed for a finite-dimensional approximation of the infinite-dimensional system.
Despite the numerous advantages, in particular the large number of  design methods applicable to finite-dimensional systems,
a main drawback of early-lumping methods is that stability of
the infinite-dimensional closed-loop system is not automatically guaranteed.
Direct methods provide an efficient alternative for the design of simple stabilizing
control laws. Within these methods, control will be designed directly on the basis of the infinite-dimensional system description. Typical examples of such designs are
collocated feedback laws obtained  as a result of energy-based design schemes, see for example~\cite{Villegas2007Phd}. Despite their undoubted elegance and simplicity,
the achievable closed-loop dynamics are limited.

Within the present contribution a third approach is pursued, the so-called late-lumping design, which came up implicitly along with the development of flatness-based \cite{WoittennekRudolph2012mathmod} and, even more, backstepping-based ~\cite{Krstic2008bcp} designs. Similarly as for direct methods, the controller and the observer will be designed for the original infinite-dimensional description of the plant.
These techniques aim in assigning desired closed-loop dynamics
to the system under consideration. This is achieved by viewing the system in
particular coordinates, which allows for simple control design, similar to the
canonical forms, well known from finite-dimensional linear systems
theory. Therefore, within this contribution, the described design techniques are referred to as transformation-based designs or as design by dynamics assignment.
Similar techniques also apply to the observer design.
Although the described techniques allow for a flexible assignment of desired closed-loop dynamics as they rely on the feedback of the infinite-dimensional state, for the same reason, they require subsequent approximation of the infinite-dimensional controller and observer schemes. This motivates the term ``late-lumping'' design.
Similarly as for the early-lumping approach, late-lumping may lead to stability issues in the closed-loop dynamics. On the one hand, the final control scheme is an arbitrarily accurate approximation of a feedback designed on the basis of the infinite-dimensional system description. It is, therefore, reasonable to expect that the obtained closed-loop dynamics are close to the desired dynamics that would have been achieved with the infinite-dimensional control law.
On the other hand, most of the systems considered within the late-lumping  approach are
boundary-controlled resp.\ posses a boundary observation. In the usual abstract state-space setting, this leads to unbounded control and observation operators. Therefore, the verification of the above expectations is not immediate. However, with a few exceptions, these problems have not been explicitly considered. 
In \cite{Auriol2019aut} stability of the closed-loop system with the approximated feedback law has been addressed for particular examples using Lyapunov techniques.
Previous results in this direction come e.g. from~\cite{Rebarber1989ieee}, where a spillover result is given for a beam equation subject to a modal approximated control law. 
Another result~\cite{GrueneMeurer2022}, showing closed-loop stability, for finite-dimensional observer-based state feedback, was given for abstract systems with discrete real-valued spectrum, by using the small-gain theorem.

In \cite{Woittennek2017ifac} only convergence of the feedback itself has been addressed without considering the closed-loop dynamics. Moreover, in \cite{Riesmeier2018cdc,Riesmeier2021pamm}
the authors propose, a modal approximation technique which superfluous the exact determination of the underlying state-transform. This essentially simplifies the implementation of the designs.
The above results and questions still open constitute the main motivation for the present article.

Within this contribution, infinite-dimensional systems with boundary control
resp.\ boundary observation are considered. Further assumptions to the system
class are formulated for the desired closed-loop system, instead of the original
control system.  In particular, the desired closed-loop operator is assumed to
be a discrete, Riesz-spectral, and possesses only simple eigenvalues. These
assumptions apply to both controller design and observer design. Further
assumptions are related to the spectral expansion of the unbounded input
or observation operators. These assumptions are necessary to allow the application of the results from~\cite{XuSallet1996siam}.
The article addresses both the controller and the observer design.
For the controller design no measurement resp.\ no observer is considered
while for the observer design no input resp.\ controller is considered. As a consequence, the approximation of observer-based output feedback is not addressed within this contribution and is left open for further research. For both scenarios, approximation schemes will be provided 
ensuring the convergence of the closed-loop spectrum to
the spectrum of the desired closed-loop system.
Concerning the application of the derived results to particular plants, two classes of systems are discussed, often occurring in physical and technical applications.
These considerations are further detailed for hyperbolic systems. 

Although using completely different techniques, the present contribution can be
seen as an extension of the results provided in \cite{Auriol2019aut} several
directions. Firstly, as already stated in \cite{Woittennek2017ifac}, the provided results are not restricted to a particular design method, e.g., backstepping design or flatness-based design.
Secondly, the class of systems considered is rather generic, i.e., only restricted by the properties of the chosen closed-loop dynamics. Finally, the convergence results are not restricted to
some stability margin of the closed-loop system but apply to the convergence of the complete spectrum.

The article is organized as follows. In Section~\ref{sec:prelim} the system
class will be introduced in detail and some theoretical background will be
recalled.  Section~\ref{sec:approx_fb} recalls the controller approximation
scheme and provides the spectral convergence result.  By duality, these results
are aligned to the approximation scheme for the observer gain, in
Section~\ref{sec:approx_obs}.  In Section~\ref{sec:ana} and
Section~\ref{sec:hyp} the application of the results will be discussed for
analytic and hyperbolic systems, respectively.  Section~\ref{sec:conclusion}
summarizes the article.

\section{Preliminaries}
\label{sec:prelim}
Within this section the notation and the structural
properties of the systems and designs under
consideration will be introduced.

\subsection{Basic notation}
As usual, $\Nats$, $\Reels$, $\Reels^+,\Compl$  denote the sets of positive integers, real numbers, non-negative real numbers, and complex numbers, respectively.
The complex conjugate of a complex number $c\in\Compl$ is denoted by
 $\cconj{c}$.
For given $n\in\Nats$, $\Compl^n$ is the
usual $n$-dimensional vector space of complex valued $n$-tuples over $\Compl$.
An element from $\Compl^{n\times m}$ is a complex valued $n\times m$ matrix.

Moreover, $\Lz(a,b;\Compl^n)$ denotes
the Lebesgue space of square-integrable functions $f:[a,b]\to\Compl^n$, $z\mapsto f(z)$,
while $\Hn{n}(a,b;\Compl^n)$ is the usual Sobolev space of $n$ times weakly  differentiable (in $\Lz(a,b;\Compl^n)$) functions on $[a,b]$ taking values in $\Compl^n$. 

The partial derivative of order $n\in\Nats$ w.r.t.\ a variable $z$ is denoted by $\diff{z}^n$.
Throughout this paper, $t\in\Reels$ stands exclusively for the time variable, the first  (partial) derivative w.r.t.\ $t$ of a function $h$ is abbreviated by $\dot{h}$.
For  two Banach spaces  $\mathcal{M}$ and $\mathcal{N}$, $\LOp(\mathcal{M}, \mathcal{N})$ denotes the
Banach space of linear bounded operators $\mathcal{M}\to\mathcal{N}$.

Let $\Ss$ denote a separable Hilbert space and
$\EA:\Ss\rightarrow \Ss$ a linear operator,
which is not necessarily bounded on $\Ss$.
The spectrum and the point spectrum of $\EA$ are denoted by $\sigma(\EA)$ and $\sigma_p(\EA)$, respectively.
Furthermore, $\{\eval_i\}$ denotes the
sequence of eigenvalues of $\EA$ and $\{\evec_i\}$ the corresponding sequence of
eigenvectors. The adjoint operator of $\EA$ is denoted by~$\EA^*$, with
eigenvalues~$\{\eval_i^*\}$ and eigenvectors $\{\evec_i^*\}$.
Moreover, $D(\EA)$ is the domain of $\EA$ and
$D(\EA)'$ is the dual space of $D(\EA)$. These spaces 
 are equipped with the graph norm and
the corresponding dual norm, respectively.
The duality pairing in $D(\EA^*)$ is denoted by
$\dual{F}{g},\,F\in D(\EA^*)',\,g\in D(\EA^*)$ and
the scalar product in $\Ss$ is denoted by
$\scalf{f}{g}{\Ss},\,f,g\in\Ss$. The scalar product as well as the duality
pairing take complex conjugation on the second argument.
The space of square-summable sequences and the space of bounded
sequences are denoted by $\lz$ and $\li$, respectively.
Finally, $\Dirac{r}(\cdot)$ is the Dirac delta distribution centered in $r\in\Reels$.

\subsection{System structure}
\label{sec:prelim:sys_struct}
First, the structure of a boundary control system~\cite{Fattorini1968siam} will be recalled,
then the dual property of boundary observation will be characterized in terms of the adjoint system.
For both, the common abstract state space representations will be given.

\subsubsection{Systems with boundary control}
Boundary control systems are of the form\footnote{Note that any system given in the seemingly more general form $\dot{\s}(t) = {\BA} \s(t) + \BB\uu(t),\,\BB\in\Ss$, \eqref{eq:bc_system_input} can be restated as $\dot{\s}(t) = ({\BA} + \BB\BR)\s(t),\,\BB\in\Ss$, \eqref{eq:bc_system_input} and is, therefore, covered by \eqref{eq:bc_system}.}
\begin{subequations}
\label{eq:bc_system}
\begin{align}
\label{eq:bc_system_dynamics}
\dot{\s}(t) &= \BA \s(t), && \s(0) = \s_0 \in \Ss \\
\label{eq:bc_system_input}
\uu(t) &= \BR \s(t)
\end{align}
\end{subequations}
with state $\s(t)\in\Ss$ and input\footnote{
  The considerations of the Subsections~\ref{sec:prelim:sys_struct}, \ref{sec:late_lumping} and \ref{sec:trafo_based}
  also apply to multi-input systems.
  However, since the convergence results given in Section~\ref{sec:approx_fb}
  are derived for single-input systems only, the system class is restricted from the beginning.
} $\uu(t)\in\Is=\Compl$,
cf.~\cite{Fattorini1968siam}.
The state space $\Ss$ is a separable Hilbert space and
$\BA:\Ss\supset D(\BA) \rightarrow \Ss$ and
$\BR:\Ss\supset D(\BA) \rightarrow \Is$
are unbounded operators on $\Ss$.

It is convenient to consider~\eqref{eq:bc_system} also in the form
\begin{align}
  \label{eq:sigma_system}
\dot{\s}(t) &= \SysOpBcs \left(\s(t),\uu(t)\right), && \s(0) = \s_0 \in \Ss,
\end{align}
where
$\SysOpBcs:\Ss\times\Is \supset D(\SysOpBcs) \rightarrow \Ss$
is unbounded
and $D(\SysOpBcs)=\{ (h_\s,h_\uu)\in D(\BA)\times\Is \,|\, \BR h_{\s}=h_{\uu} \}$
is dense in $\Ss\times\Is$.

For a unified treatment of the controller and observer design, a reformulation of
\eqref{eq:sigma_system} (resp.\ \eqref{eq:bc_system}) and \eqref{eq:dual_sys} as evolution equations are considered. More precisely,
\eqref{eq:sigma_system} is associated with
\begin{subequations}
\label{eq:system}
\begin{align}
\label{eq:system_dynamics}
\dot{\s}(t) &= \EA \s(t) + \EB \uu(t), && \s(0) = \s_0 \in \Ss
\end{align}
\end{subequations}
as described in~\cite[Chapter 3]{BensoussanPratoDelfourMitter2007}.
Therein, the system operator $\EA:\Ss\supset D(\EA) \rightarrow \Ss$
and the input operator $\EB\in\LOp(\Is, D(\EA^*)')$
are defined by the following relations: 
\begin{subequations}
  \label{eq:bc_ev_relation}
\begin{align}
&D(\EA) = \{h \in D(\BA) \,| \, \BR h = 0 \} && \\
&\EA\, h = \BA\, h, && \forall\,h\in D(\EA) \\
&\dualf{\EB}{h}{\SysOpBcs^{*}}=\scalf{\left(0, 1\right)}{\SysOpBcs^*h}{\Ss\times\Is}, && \forall\,h\in D(\SysOpBcs^*).
\end{align}
\end{subequations}
Throughout this contribution, $\EA$ is assumed to be the infinitesimal generator 
of a \CN-semigroup on $\Ss$, while $\EB$ is not required to be admissible\footnote{
Instead of admissibility of the input operator admissibility of
the feedback operator is required to be admissible within this contribution, cf.~\cite{Rebarber1989ieee}.}
in the sense of~\cite{TucsnakWeiss2009}.

\subsubsection{Systems with boundary observation}
Consider the system%
\begin{subequations}
\label{eq:bo_system}
\begin{align}
\label{eq:bo_system_dynamics}
\dot{\s}(t) &= \EA \s(t), && \s(0) = \s_0 \in \Ss \\
\label{eq:bo_system_bc}
\yy(t) &= \EC \s(t)
\end{align}
\end{subequations}
with state $\s(t)\in\Ss$ and output $\yy(t)\in\Os=\Compl$,
where $\EC:\Ss\supset D(\EA) \rightarrow \Os$ is the
unbounded observation operator. 
This system can also be written in the form
\begin{align}
  \label{eq:bo_sigma_system}
  (\dot{\s}(t),\yy(t)) = \SysOpBos \s(t), && \s(0)=\s_0\in\Ss,
\end{align}
where $\SysOpBos:\Ss \supset D(\SysOpBos) \rightarrow \Ss\times\Os$
is unbounded and $D(\SysOpBos)=D(\EA)$.

  System~\eqref{eq:bo_system} (resp.\ \eqref{eq:bo_sigma_system}) is called a system with boundary observation,
  if the adjoint system
  \begin{align}
    \label{eq:dual_sys}
  \dot\s^*(t) = \SysOpBos^*\left(\s^*(t), \yy^*(t)\right)
  \end{align}
  with
  input $\yy^*(t)\in\Os$ is a boundary control system, i.e.,
there exist operators
\begin{align*}
\OA:D(\OA)\to \Ss, && \OR:D(\OA)\to \Os
\end{align*}
with similar properties
as  $\BA$, $\BR$ such that
\begin{multline*}
\SysOpBos^*(\s^*(t),\yy^*(t))=\OA \s^*(t), \\
    D(\SysOpBos^*)=\{(h_{\s^*},h_{\yy^*})\in D(\OA)\times\Os \,|\, \OR h_{\s^*}=h_{\yy^*} \}.
\end{multline*}

The state space representation of the adjoint system \eqref{eq:dual_sys} is given by
\begin{subequations}
\label{eq:system_dual}
\begin{align}
\label{eq:system_dual_dynamics}
\dot{\s}^*(t) &= \EA^* \s^*(t) + \EC^* \yy^*(t), && \s^*(0) = \s^*_0 \in \Ss,
\end{align}
\end{subequations}
where
\begin{align*}
&D(\EA^*) = \{h \in D(\OA) \, | \, \OR h = 0 \} &&\\
&\EA^*h= \OA h,&&\forall\,h\in D(\EA^*)\\
&\dualf{\EC^*}{h}{\SysOpBos}=\scalf{\left(0, 1\right)}{\SysOpBos h}{\Ss\times\Os}, &&\forall\,h\in D(\SysOpBos).
\end{align*}

\subsection{Design by dynamics assignment}
\label{sec:late_lumping}
A common feature of the designs considered within this contribution is the idea not only to design
stabilizing feedback resp.\ convergent observers but, explicitly prescribe a 
desired closed loop dynamics. As outlined within the introduction, two
particular cases are treated within this contribution, the controller design by
state feedback and the
observer design for the autonomous system. The configurations considered are briefly introduced below. Note that the detailed analysis of the corresponding approximation
schemes will be described within Sections~\ref{sec:approx_fb} and \ref{sec:approx_obs}.

\subsubsection{State feedback design}
\label{sec:late_lumping_coc}
Starting from the boundary control system~\eqref{eq:bc_system}, the above-introduced idea consists in replacing the
boundary condition \eqref{eq:bc_system_input} by 
\begin{equation}\label{eq:bc_system_input:desired}
  \desCs{\uu}(t)=\desCs{\BR}\s(t),\quad \desCs{\BR}\in\LOp(D(\BA),\Is)
\end{equation}
with new input $\desCs{\uu}$ and the desired boundary operator $\desCs{\BR}$.
In a state-space setting, the desired closed loop system is given by
\begin{align}
\label{eq:closed_loop_system}
  \dot{\s}(t)&=\desCs{\EA} \s(t)+\desCs{\EB}\desCs{\uu}(t)
\end{align}
where the operators $\desCs{\EA}$ and $\desCs{\EB}$
are deduced from \eqref{eq:bc_system}, with \eqref{eq:bc_system_input} replaced by \eqref{eq:bc_system_input:desired}, in the same way as $\EA$ and $\EB$ in the previous section.

It remains to compute the feedback gain achieving the desired closed loop system.
Combining \eqref{eq:bc_system_input} and \eqref{eq:bc_system_input:desired}, it is obvious, that, starting from \eqref{eq:bc_system} any  feedback of the form
\begin{align}
  \label{eq:bcs_frak_fb}
  \uu(t) = \BKc \s(t)+\BG\,\desCs{\uu}(t),
\end{align}
with feedback gain
\begin{align}
  \label{eq:bcs_frak_gain}
  \BKc = \BR - \BG\,\desCs{\BR}\in\LOp(D(\BA),\Is),
\end{align}
where $\BG$ is an arbitrary nonzero real constant,
yields the desired closed-loop dynamics.
Although the particular representation of the feedback depends on $\BG$, all these representations are equivalent.
The constant $\BG$ should be chosen such that $\BKc$ takes a convenient form, for
the purpose of implementation.

Up to now a link between the original system  and the corresponding closed-loop system, both given as  boundary control systems, has been established via the feedback \eqref{eq:bcs_frak_fb}. Moreover, the corresponding state-space descriptions \eqref{eq:system} and \eqref{eq:closed_loop_system} have been deduced from the descriptions as boundary control systems. It remains to establish a direct link between these state-space descriptions, i.e., to deduce from \eqref{eq:bcs_frak_gain} a  feedback
\begin{align}
  \label{eq:bcs_cal_fb}
\uu(t) = \Kc \s(t) + \BG\,\desCs{\uu}(t), && \Kc\in\LOp(D(\desCs{\EA}),\Is),
\end{align}
such that $\desCs{\EA}=\EA+\EB\Kc$.
This is achieved by restricting the domain of $\BKc$ to $D(\desCs{\EA})$:
  \begin{align}
    \label{eq:bcs_cal_fb_gain}
  \Kc\s(t) = \BKc\s(t) = \BR\s(t), && \s(t)\in D(\desCs{\EA}).
  \end{align}
\begin{rem}
  \label{rem:adc_meaning}
The expression $\desCs{\EA}=\EA+\EB\Kc$
has a formal meaning within this contribution, since it is not
immediately clear how to read the operator $\EA+\EB\Kc$,
the precise definition of $\desCs{\EA}$ was given at the top of this subsection,
in terms of the boundary control system.
\end{rem}

\subsubsection{State observer design}
\label{sec:late_lumping_ooc}
For the observer design,
system~\eqref{eq:bo_system} together
with the observer
\begin{subequations}
\label{eq:intro_obs_only}
\begin{align}
\dot{\sh}(t) &=  \EA\sh(t) + \Lc\yt(t), && \yt(t) = \yh(t) - \yy(t) \\
\yh(t) &= {\EC}\sh(t),
\end{align}
\end{subequations}
is considered.
The observer gain $\Lc\in\LOp(\Os,D(\desOs[*]\EA)')$ has to be designed such
that the observer error system\footnote{
Like the $\desCs{\EA}=\EA+\EB\Kc$ from the previous subsection also 
$\desOs{\EA}=\EA + \Lc\EC$ has a formal meaning within this contribution
since the precise definition of $\desOs{\EA}$ is given in terms of the adjoint
system under boundary control, cf.\ Remark~\ref{rem:adc_meaning}.}
\begin{align*}
\dot{\st}(t) = (\EA + \Lc{\EC}) \st(t)=\desOs{\EA}\st(t), && \st(t)=\sh(t)-\s(t)
\end{align*}
possesses the desired dynamics $\desOs{\EA}: \Ss\supset D(\desOs{\EA}) \rightarrow \Ss$.
The operator $\desOs{\EA}$, in particular its domain, is defined as the adjoint of $\desOs[*]{\EA}$ with
\begin{align*}
D(\desOs[*]{\EA}) = \{h\in D(\OA) | \OR h = \OKc h\},
\end{align*}
where $\OKc\in\LOp(D(\OA),\Os)$, corresponding to $\BKc$, is the feedback operator
of the adjoint system~\eqref{eq:dual_sys} with dynamics $\desOs[*]{\EA}$.
This means, that system~\eqref{eq:dual_sys} under the feedback
\begin{align*}
  \yy^*(t) = \OKc\s^*(t) + \BG\,\desOs[*]{\yy}(t)
\end{align*}
with feedback gain
\begin{align*}
  \OKc = \OR - \BG\,\desOs{\OR}\in\LOp(D(\OA),\Os),
\end{align*}
has the state-space representation
\begin{align*}
  \dot{\s}^*(t)&=\desOs[*]{\EA} \s^*(t)+\desOs[*]{\EC}\desOs[*]{\yy}(t),
\end{align*}
where $\desOs[*]{\yy}$ can be understood as new input of the adjoint system.
Since~\eqref{eq:dual_sys} is a system with boundary control, the design of $\OKc$
follows immediately from that of $\BKc$ described in Section~\ref{sec:late_lumping_coc} by duality.
Finally, the observer gain $\Lc$ can be defined as the adjoint of the restriction of $\OKc$ to $D(\desOs[*]{\EA})$:
\begin{align*}
  \Lc^*\s^*(t) = \OKc\s^*(t) = \OR\s^*(t), && \s^*(t)\in D(\desOs[*]{\EA}).
\end{align*}

\subsection{Transformation based design}
\label{sec:trafo_based}
Following the design by dynamics assignment as described in Section~\ref{sec:late_lumping_coc},
in most cases the appropriate choice of $\desCs{\BR}$ is not obvious. Therefore, such designs
usually rely on  a state transformation $q(t) = \Trafo^q_\s \s(t)$.
In the new coordinates, the system
\begin{align*}
  \dot{q}(t) = {\BA}_q q(t), &&
  u(t) = \BR_q q(t)
\end{align*}
appears in a simplified form, where the choice of $\desCs{\BR}_q$ for the feedback
\begin{align}
  \label{eq:bc_system_ctrl}
\uu(t) = \BKc_q\, q(t) + \BG_q\desCs{\uu}(t),
\end{align}
with feedback gain
\begin{align*}
  \BKc_q=\BR_q - \BG_q\desCs{\BR}_q\in\LOp(D(\BA_q),\Is),
\end{align*}
is simple.
As described in Section~\ref{sec:late_lumping_coc},
this feedback assigns a desired dynamics $\desCs{\EA}_q$ to the
closed loop system
\begin{align}
  \label{eq:trafo_based_des_sys}
  \dot{q}(t) = \desCs{\BA}_q q(t), &&
  \desCs{\uu}(t) = \desCs{\BR}_q q(t).
\end{align}
The challenging part of  such designs is the determination of the transformation
$\Trafo^q_\s$, required to compute the feedback
\begin{align}
  \label{eq:trafo_based_ctrl}
\uu(t) = \BKc\, \s(t) + \BG_q\desCs{\uu}(t), && \BKc=\BKc_q\Trafo^q_\s\in\LOp(D(\BA),\Is)
\end{align}
in the original coordinates.

Typical examples of such designs are flatness-based designs (see, e.g., \cite{Woittennek2013cpde} and Section~\ref{sec:example_hyperbolic}) and simple
backstepping designs (cf.\ \cite{Krstic2008bcp}).
Similar techniques apply to the observer design \cite{Krstic2008bcp, Riesmeier2021pamm}.

\subsection{Properties of the involved operators}

The results of this article are restricted to desired closed-loop operators
$\desS{\EA}\in\{\desCs{\EA}, \desOs{\EA}\}$ which satisfy the following
assumption.
\begin{assum}
  \label{hypo:ad_spec}
  $\desS{\EA}$ has the following spectral properties.
\begin{enumerate}[ref={A\ref{hypo:ad_spec}.\arabic*},label={A\ref{hypo:ad_spec}.\arabic*:},leftmargin={3.9em}]
\item $\desS{\EA}$ is a Riesz-spectral
operator~\cite{GuoZwart2001report}.
\label{item:spec_riesz}
\item $\desS{\EA}$ is a discrete operator \cite{DunfordSchwartz3}.
\label{item:spec_disc}
\item The eigenvalues $\{\desS{\eval}_i\}_{i=1}^\infty$  of $\desS{\EA}$ are simple\footnote{Note that Assumption~\ref{item:spec_simple} is a reasonable technical assumption
in order to avoid the introduction of generalized eigenvectors and, this way, simplify computations.}.
\label{item:spec_simple}
\end{enumerate}
\end{assum}

Among others, from \ref{item:spec_riesz} it follows that the closure of the span of the eigenvectors $\{\desS{\evec}_i\}_{i=1}^\infty$
of $\desS{\EA}$ is $\Ss$, hence this sequence is well suited as an approximation basis for the state space.
\ref{item:spec_disc} ensures that $\desS{\EA}$ has a pure point spectrum
$\sigma(\desS{\EA})=\sigma_p(\desS{\EA})=\{\desS{\eval}_i\}_{i=1}^\infty$
without any finite accumulation points.

The eigenvectors $\{\desS{\evec}_i\}$ and $\{\desS[*]{\evec}_i\}$ of $\desS{\EA}$ and $\desS[*]{\EA}$
are assumed to be normalized, such that $\scal{\desS{\evec}_i}{\desS[*]{\evec}_i}=1$, $i\ge 1$.
Furthermore, $\scal{\desS{\evec}_i}{\desS[*]{\evec}_j}=0, \, i\neq j$ follows from Assumption~\ref{hypo:ad_spec}.
In order to use a perturbation result from \cite{XuSallet1996siam} the
input and output operators must satisfy the following condition.
\begin{assum}
  \label{hypo:xu_sallet_h3}
Let $d_i,\,i=1,2,...$ be the distance from the eigenvalue $\desS{\eval}_i\in\sigma(\desS{\EA})$ to the
rest of the spectrum $\sigma(\desS{\EA})$,
$\desS{D}_i=\{z\in\Compl\,|\,\frac{d_i}{3}>|z-\desS{\eval}_i|\}$ the disk centered at
$\desS{\eval}_i$ and
$\desS{D} = \bigcup_{i=1}^\infty \desS{D}_i$ the union of the disks.
\begin{enumerate}[ref={A\ref{hypo:xu_sallet_h3}.\arabic*},label={A\ref{hypo:xu_sallet_h3}.\arabic*:},leftmargin={3.9em}]
  \item For the elements $\{\desCs{\mb}_i=\dualdc{\desCs{\EB}}{\desCs[*]{\evec}_i}\}$
    of the modal input operator and the eigenvalues $\{\desCs{\eval}_i\}$ exists a $M\in\Reels^+$ such that
    \begin{align*}
    \sum_{i=1}^\infty \left|\frac{\desCs{\mb}_i}{\eval - \desCs{\eval}_i}\right|^2 \le M < \infty,
    && \forall \eval\not\in D=\desCs{D}.
    \end{align*}
  \label{item:xu_sallet_h3}
  \item For the elements $\{\desOs{\mc}_i=\desOs{\EC}\desOs{\evec}_i\}$
    of the modal output operator and the eigenvalues $\{\desOs{\eval}_i\}$ exists a $\desOs{M}\in\Reels^+$ such that
    \begin{align*}
    \sum_{i=1}^\infty \left|\frac{\desOs{\mc}_i}{\eval - \desOs{\eval}_i}\right|^2 \le \desOs{M} < \infty,
    && \forall \eval\not\in \desOs{D}.
    \end{align*}
  \label{item:xu_sallet_h3_dual}
\end{enumerate}
\end{assum}

Since $\desS{\EA}$ is
Riesz-spectral, the property $\sup_{i\in\Nats}\Re(\desS{\eval}_i) < +\infty$
implies that $\desS{\EA}$ generates a \CN-semigroup.
Furthermore, this semigroup has the form \cite{GuoZwart2001report}
\begin{align*}
\desS{\SG}(t) = \sum_{i=1}^\infty e^{\desS{\eval}_i t}\scal{\cdot}{\desS[*]{\evec}_i}\,\desS{\evec}_i.
\end{align*}
An important property for the stability analysis is, that the spectral
bound $s(\desS{\EA}) = \sup_{\eval\in\sigma(\desS{\EA})}\Re(\eval)$ of the
operator $\desS{\EA}$ coincides with the growth order\footnote{
The definition of the growth order implies that for each $\omega > \omega(\desS{\EA})$ there
exists a constant $M$ such that $\|\desS{\SG}(t)\|\le M e^{\omega t},\,t\ge 0$.
In the following the term growth order will also be used, for the growth order $\hat\omega$
of the time evolution $\s(t),\,t\ge 0$. In this case there exists a constant $M$,
such that for each $\omega>\hat\omega$, $\|\s(t)\|\le Me^{\omega t}\|\s_0\|,\,t\ge0$.}
$\omega(\desS{\EA})=\lim_{t\rightarrow\infty}\log\|\desS{\SG}(t)\|\,t^{-1}$ of
the semigroup $\desS{\SG}(t)$.
This property is called the spectrum determined growth
condition~\cite{XuFeng2001, CurZwa95}. It always holds for Riesz-spectral systems and
allows deducing exponential stability of the desired system from
$s(\desS{\EA})<0$.

In Section~\ref{sec:approx_fb} it will be shown that also the closed loop operator $\EAcl$,
obtained with an approximated controller,
generate a \CN-semigroup and
not only $\omega(\EAcl)=s(\EAcl)$, but also $\omega(\EAcl)=s_p(\EAcl)$ continues
to apply. That means that the growth bound of the \CN-semigroup is determined by the
bound $s_p(\EAcl)=\sup_{\clS{\eval}\in\sigma_p(\EAcl)}\Re(\clS{\eval})$ of the
point spectrum $\sigma_p(\EAcl)$.
In this case, exponential stability can be checked by computing
the eigenvalues of the closed loop system.
Moreover, it will be
shown that from some approximation order the spectrum of the closed loop system
converges to the desired spectrum.

\subsection{Further notation}
Depending on the context we refer to a certain system using
the operators
$(\BA^\star_\bullet, \BR^\star_\bullet, \EA^\star_\bullet, \EB^\star_\bullet, \EC^\star_\bullet, \SG^\star_\bullet,
\SysOpBcs[,\bullet]^\star, \SysOpBos[,\bullet]^\star)$
and the corresponding state space $\Ss_\bullet$. The superscript $\star$ defines the dynamics,
e.g. closed loop $\star=\clSText$ or desired $\star=\desSText$, and for fixed
$\star$ the subscript $\bullet$ determines the coordinates.
With this notation also other operators and
elements will be equipped in the next sections, for example,
the sequence of eigenvalues $\{\eval^\star_i\}$ of $\EA^\star_\bullet$,
the sequence of eigenvectors $\{\evec^{\star}_{\bullet,i}\}$ of $\EA^\star_\bullet$,
the sequence of eigenvectors $\{\evec^{\star *}_{\bullet,i}\}$ of $\EA^{\star *}_\bullet$
and the elements $\ms^{\star}_i=\scalf{\bullet}{\evec^{\star *}_{\bullet,i}}{\Ss_\bullet}$
of the modal state $\ms^{\star}$.
When referring to the open loop dynamics or to the
original coordinates $\s$ the respective placeholders $\star$ and $\bullet$ are left empty.

\section{Feedback approximation}
\label{sec:approx_fb}
The design methods described in the previous section result in a control law
$\uu(t)=\BKc\s(t)+\BG\,\desCs{\uu}(t)$, $\BKc\in\LOp(D(\BA),\Is)$ that may
include integral operators which have to be approximated for the purpose of implementation.
For ease of notation and without loss of generality in the following $\desCs{\uu}(t)=0$.

\subsection{Approximation scheme}
\label{sec:approx_fb_part_one}
With an approximation of $\s(t)$ by the convergent sequence $\{\s^n\}_1^\infty$, $\s^n\in\Ss$,
the control law reads
\begin{equation*}
\uu(t) = \BKc \lim_{n\rightarrow\infty}\s^n(t).
\end{equation*}
It is assumed that each element $\s^n$ of the approximating
sequence is an element of an $n$-dimensional subspace
$\Ss^n \subset \Ss$ of the state-space. Therefore, the
sequence $\{\Ss^n\}_{1}^\infty$ of finite dimensional subspaces of
$\Ss$ is considered, which has to be chosen in such a way, that for
each $\s\in\Ss$ a sequence
$\{\s^i\}_{1}^{n}$, $\s^i\in\Ss^i$ of approximations exists, that
converges to $\s$.
Finally, $\{\svec^n_i\}_1^n$ denotes
a basis of the space $\Ss^n$.  Thus, each element
$\s^n\in\Ss^n$ with the properties described above can be uniquely expressed by
\begin{align*}
  \s^n(t) = \sum_{i=1}^{n}\ms_i^n(t)\svec_i^n, && \ms_i^n(t)\in\Compl.
\end{align*}
Typically, the feedback operator $\BKc$ is unbounded on $\Ss$, so it does not commute with the limit,
$\BKc\lim_{n\rightarrow\infty}\s^n(t) \neq \lim_{n\rightarrow\infty}\BKc\s^n(t)$.
Therefore, an approximation requires a decomposition
\begin{equation}
  \label{eq:fb_decomp}
\BKc = \BKu + \Kb
\end{equation}
into an unbounded part $\BKu\in\LOp(D(\BA),\Is)$ and a bounded
part $\Kb\in\LOp(\Ss,\Is)$, where each $\Kb$ can be stated as $\Kb=\scal{k}{\cdot}$ with a suitable $k\in\Ss$.
As described above, $\BKu$ can not be approximated but is assumed to be of
simple structure, for example, a point evaluation at the boundary. Hence, it is
reasonable to assume that $\BKu$ can be realized exactly.
Note that, the desired structure of $\BKu$ is achieved
by choosing $\BG$ in~\eqref{eq:bcs_frak_fb} appropriately, c.f.~\cite{Woittennek2017ifac}.

With the decomposition~\eqref{eq:fb_decomp}, the boundary condition \eqref{eq:bc_system_input} can be written as
\begin{align}
\label{eq:ctrl_is_bc}
 \Kb\s(t) = \ctrlIs{\BR} \s(t), && \ctrlIs{\BR}=\BR - \BKu
\end{align}
and one can introduce the intermediate boundary control system $(\BA,\ctrlIs{\BR})$.
The operators $(\ctrlIs{\EA},\ctrlIs{\EB})$ of the corresponding state space representation
can be derived from $(\BA,\ctrlIs{\BR})$ in the same way as $({\EA},{\EB})$ from $(\BA,{\BR})$.
More precisely the dynamics operator
$\ctrlIs{\EA}: \Ss \supset D(\ctrlIs{\EA})\rightarrow \Ss$
of the intermediate system is given by
\begin{align*}
  \ctrlIs{\EA}h_\s=\BA\,h_\s , && h_\s\in D(\ctrlIs{\EA})=\{h \in D(\BA) | \ctrlIs{\BR}h=0\}
\end{align*}
and $\ctrlIs{\EB}\in D(\ctrlIs[*]{\EA})'$.
Now, the desired dynamics $\desCs{\EA}$
can also be written as
\begin{align*}
\desCs{\EA} = \ctrlIs{\EA} + \ctrlIs{\EB} \Kb.
\end{align*}

Note that the perturbation $\ctrlIs{\EB} \Kb$ of $\ctrlIs{\EA}$,
with bounded $\Kb$,
does not affect the domain of the adjoint operator,
i.e., $D(\desCs[*]{\EA})=D(\ctrlIs[*]{\EA})$.
Therefore, in contrast to $\desCs{\EA}=\EA+\EB\Kc$, the decomposition $\desCs{\EA}=\ctrlIs{\EA} + \ctrlIs{\EB} \Kb$
is well defined when viewing $\desCs{\EA}$ an operator $\Ss\to D(\ctrlIs[*]{\EA})'$.
Moreover, the input operators
$\ctrlIs{\EB},\desCs{\EB}\in D(\ctrlIs[*]{\EA})'=D(\desCs[*]{\EA})'$
coincide up to a scaling\footnote{
  According to~\eqref{eq:bc_system_input:desired}, the desired closed-loop
  system is independent of the choice of $\BG$.  In contrast, $\BG$ scales the
  controller intermediate system.  This is reflected by a scaling of the
  corresponding input operators.
}: $\desCs{\EB}=\BG\ctrlIs{\EB}$.

Due to the convergence of
$\{\s^{n}(t)\}$, the bounded part $\Kb\s(t)$ of the control law
can be written as a limit and approximated by choosing $n$ sufficiently large\footnote{
The difficult part of the late-lumping methods under consideration is the determination
of the bounded part $\Kb$, e.g. for backstepping designs the determination of the
backstepping kernel. For this reason, in~\cite{Woittennek2017ifac} the authors introduced
an approximation method which allows skipping this difficult part of the respective late-lumping design
if one is interested in a finite-dimensional approximation $\Kb^{n}$ of the resulting
bounded part only. Within this section no
understanding of the approximation method proposed in~\cite{Woittennek2017ifac} is required
since one can assume that $\Kb$ is explicitly available.
}:
\begin{align}
\label{eq:fa_ctrl_approx_scheme}
\Kb\s^{n}(t)&= \sum_{i=1}^{n}\ms_i^n(t)\Kb\svec_i^n = \Kb^n\s(t)
=\scal{x(t)}{k^n},
\end{align}
with $k^n\in\Ss^n$.
Now the closed-loop dynamics
$\clS{\EA}:\Ss\supset D(\clS{\EA})\rightarrow \Ss$ of the plant subject to
the approximated control law can be introduced:
\begin{align}
  \label{eq:def_clS_fb}
\clS{\EA} = \ctrlIs{\EA} + \ctrlIs{\EB} \Kb^{n}, && D(\clS{\EA})=\{h\in D(\BA)|\ctrlIs{\BR} h=\Kb^n h\}.
\end{align}

As the desired closed loop operator $\desCs{\EA}$ is Riesz-spectral by assumption,
the convergence of the closed-loop dynamics $\clS{\EA}$ to the desired one is
characterized in terms of the spectrum.
However, since $\ctrlIs{\EB}$ is unbounded, the convergence of the spectrum is not immediate,
i.e., $\Kb^n\to\Kb$ does not directly imply
$\sigma(\clS{\EA}) \to \sigma(\desCs{\EA})$ as $n\rightarrow\infty$.

\subsection{Well-posedness and convergence}
Within this section, it will be shown that the intermediate system
as well as the closed-loop system is well-posed. After that, it will be
proven that the closed-loop operator converges to the desired operator,
in a spectral sense.

To apply a perturbation result from \cite{XuSallet1996siam}, Hypotheses
H1-H3 from \cite{XuSallet1996siam} have to be fulfilled. The next
Lemma shows that \ref{item:xu_sallet_h3} implies H3 of \cite{XuSallet1996siam}.
\begin{lem}
\label{lem:xu_sallet_h3_2}
Assume that \ref{item:xu_sallet_h3} holds true, then
\begin{align*}
\sum_{i=1,i\neq j}^\infty \left|\frac{\desCs{\mb}_i}{\desCs{\eval}_j - \desCs{\eval}_i}\right|^2 \le 3M < \infty,
&& \forall j\in\Nats.
\end{align*}
\end{lem}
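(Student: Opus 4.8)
The plan is to exploit Assumption~\ref{item:xu_sallet_h3} at a point $\eval$ that sits close to $\desCs{\eval}_j$ but just outside the excluded set $\desCs{D}$, and then to transfer the resulting bound back to $\desCs{\eval}_j$ by a term-by-term comparison of the two series. Fix $j\in\Nats$. Since $\desCs{\EA}$ is discrete by \ref{item:spec_disc}, its eigenvalues have no finite accumulation point and $\desCs{\eval}_j$ is simple, so the distance $d_j$ is strictly positive and the circle $\{\eval\in\Compl \,|\, |\eval-\desCs{\eval}_j|=d_j/3\}$ is nonempty. First I would fix any $\eval$ on this circle and show that $\eval\notin\desCs{D}$, so that \ref{item:xu_sallet_h3} may be applied at $\eval$.

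For the geometric verification, note that $\eval$ lies on the boundary of the open disk $\desCs{D}_j$, hence $\eval\notin\desCs{D}_j$. For $i\neq j$ the definition of the distances yields both $d_j\le|\desCs{\eval}_j-\desCs{\eval}_i|$ and $d_i\le|\desCs{\eval}_j-\desCs{\eval}_i|$; combining the first of these with the reverse triangle inequality gives
\[
|\eval-\desCs{\eval}_i|\ \ge\ |\desCs{\eval}_j-\desCs{\eval}_i|-\tfrac{d_j}{3}\ \ge\ \tfrac{2}{3}|\desCs{\eval}_j-\desCs{\eval}_i|\ \ge\ \tfrac{2}{3}d_i\ >\ \tfrac{d_i}{3},
\]
so $\eval\notin\desCs{D}_i$ for every $i\neq j$ as well, and therefore $\eval\notin\desCs{D}$. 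This geometric step is the only non-routine part of the argument; the rest is bookkeeping.

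With this $\eval$ at hand, \ref{item:xu_sallet_h3} gives $\sum_{i=1}^\infty|\desCs{\mb}_i/(\eval-\desCs{\eval}_i)|^2\le M$. On the other hand, for $i\neq j$ the ordinary triangle inequality together with $d_j\le|\desCs{\eval}_j-\desCs{\eval}_i|$ yields $|\eval-\desCs{\eval}_i|\le|\eval-\desCs{\eval}_j|+|\desCs{\eval}_j-\desCs{\eval}_i|\le\tfrac{4}{3}|\desCs{\eval}_j-\desCs{\eval}_i|$, hence the denominators are comparable:
\[
\left|\frac{\desCs{\mb}_i}{\desCs{\eval}_j-\desCs{\eval}_i}\right|^2\ =\ \left|\frac{\desCs{\mb}_i}{\eval-\desCs{\eval}_i}\right|^2\cdot\frac{|\eval-\desCs{\eval}_i|^2}{|\desCs{\eval}_j-\desCs{\eval}_i|^2}\ \le\ \frac{16}{9}\left|\frac{\desCs{\mb}_i}{\eval-\desCs{\eval}_i}\right|^2 .
\]
Summing over $i\neq j$ and bounding the right-hand side by the full series $\sum_{i=1}^\infty$ then gives $\sum_{i\neq j}|\desCs{\mb}_i/(\desCs{\eval}_j-\desCs{\eval}_i)|^2\le\tfrac{16}{9}M<3M$, which is even a bit stronger than claimed; since $\tfrac{16}{9}<3$, the asserted bound $3M$ follows for every $j\in\Nats$. (The looser constant $3$ in the statement presumably absorbs a cruder form of the comparison estimate.)
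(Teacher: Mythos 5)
Your proof is correct, and it reaches the conclusion by a genuinely different mechanism than the paper, even though both start from the same idea of invoking Assumption~\ref{item:xu_sallet_h3} at sample points on the circle $|\eval-\desCs{\eval}_j|=\tfrac{d_j}{3}$, i.e., on the boundary of $\desCs{D}_j$. (Your explicit verification that such points lie outside every other disk $\desCs{D}_i$ — via $|\eval-\desCs{\eval}_i|\ge\tfrac{2}{3}|\desCs{\eval}_j-\desCs{\eval}_i|\ge\tfrac{2}{3}d_i$ — is a step the paper only asserts.) The difference lies in how the bound at the sample point is transferred back to the center $\desCs{\eval}_j$: the paper places \emph{three} equally spaced points on the circle and uses an angular pigeonhole argument to find, for each $i\neq j$, one sample point strictly closer to $\desCs{\eval}_i$ than $\desCs{\eval}_j$ is; summing the three resulting series produces the constant $3M$. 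You instead use a \emph{single} sample point together with the two-sided comparison $\tfrac{2}{3}|\desCs{\eval}_j-\desCs{\eval}_i|\le|\eval-\desCs{\eval}_i|\le\tfrac{4}{3}|\desCs{\eval}_j-\desCs{\eval}_i|$, which converts the series term by term with the uniform factor $\tfrac{16}{9}$. Your route is more elementary — no per-index selection among several sample points is needed — and it yields the sharper bound $\tfrac{16}{9}M\le 3M$; your closing remark is exactly right that the constant $3$ in the statement simply reflects the paper's cruder three-copies-of-$M$ bookkeeping. All the individual estimates you use (positivity of $d_j$ from Assumptions \ref{item:spec_disc} and \ref{item:spec_simple}, the openness of the disks, both triangle inequalities, and dropping the nonnegative $j$-th term before applying \ref{item:xu_sallet_h3}) check out.
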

\begin{proof}
Let $\eval_{j,m}= \desCs{\eval}_j + \Delta\eval_{j,m},\,m\in\{1,2,3\}$ with
$|\Delta\eval_{j,m}|=\frac{d_j}{3}$ and $\arg(\Delta\eval_{j,m})=m\frac{2\pi}{3}$, be
three points on the boundary of $D_j$.
Note that, for each pair $(i,j)$ there is always a $m\in\{1,2,3\}$
such that $|\eval_{j,m} - \desCs{\eval}_i|<|\desCs{\eval}_j - \desCs{\eval}_i|$.
According to Assumption~\ref{hypo:xu_sallet_h3}
\begin{align*}
\sum_{i=1,i\neq j}^\infty\left|\frac{\desCs{\mb}_i}{\eval_{j,m} - \desCs{\eval}_i}\right|^2
< M, && \forall m\in\{1,2,3\}
\end{align*}
since $\eval_{j,m}\not\in D$.
Let $m_{j,i}$ be the index for which ${|\eval_{j,m_{j,i}} - \desCs{\eval}_i|}$ becomes minimal, for fixed $j$ and $i$.
Then
\begin{align*}
\sum_{i=1,i\neq j}^\infty \left|\frac{\desCs{\mb}_i}{\desCs{\eval}_j - \desCs{\eval}_i}\right|^2 &<
\sum_{i=1,i\neq j}^\infty \left|\frac{\desCs{\mb}_i}{\eval_{j,m_{j,i}} - \desCs{\eval}_i}\right|^2 \\ &<
\sum_{m=1}^3\sum_{i=1,i\neq j}^\infty\left|\frac{\desCs{\mb}_i}{\eval_{j,m} - \desCs{\eval}_i}\right|^2
< 3M.
\end{align*}
\end{proof}
Since H1 and H2 of \cite{XuSallet1996siam}
are also fulfilled, cf. Section~\ref{sec:intro}, we can apply the following
perturbation result, which is a part of \cite[Theorem~1]{XuSallet1996siam}.
\begin{lem}
\label{lem:xu_sallet_theorem11}
For arbitrary $\Kb\in\LOp(\Ss,\Is)$ the operator $\ctrlIs{\EA}=\desCs{\EA}-\ctrlIs{\EB} \Kb$ of the
controller intermediate system
is the generator of a \CN-semigroup,
is Riesz-spectral and has compact resolvent.
\end{lem}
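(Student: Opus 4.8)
The plan is to derive the statement from \cite[Theorem~1]{XuSallet1996siam}, applied with $\desCs{\EA}$ as the unperturbed operator and $P:=-\ctrlIs{\EB}\Kb$ as the perturbation, so that the operator under consideration is $\ctrlIs{\EA}=\desCs{\EA}+P=\desCs{\EA}-\ctrlIs{\EB}\Kb$. It is worth noting that the perturbation runs ``backwards'': even though $\ctrlIs{\EA}$ is the more elementary object, it is $\desCs{\EA}$ whose spectral structure is postulated in Assumption~\ref{hypo:ad_spec}, and in terms of whose eigenvalues $\desCs{\eval}_i$ and biorthogonal eigenvectors $\desCs{\evec}_i,\desCs[*]{\evec}_i$ the summability hypothesis \ref{item:xu_sallet_h3} is formulated. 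Hence $\desCs{\EA}$ must play the role of the base operator in the perturbation theorem (we cannot use $\ctrlIs{\EA}$, since Riesz-spectrality of $\ctrlIs{\EA}$ is precisely what we want to prove), and the properties of $\ctrlIs{\EA}$ are obtained as those of the perturbed operator.

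To invoke the theorem, Hypotheses H1--H3 of \cite{XuSallet1996siam} must be checked. H1 and H2 depend on $\desCs{\EA}$ alone and, as already pointed out above, follow from Assumption~\ref{hypo:ad_spec}: by \ref{item:spec_disc} and \ref{item:spec_simple} the operator $\desCs{\EA}$ is discrete with simple eigenvalues, which furnishes the spectral-gap structure underlying the disks $\desCs{D}_i$, and by \ref{item:spec_riesz} its eigenvectors form a Riesz basis together with the biorthogonal system $\{\desCs[*]{\evec}_i\}$. The perturbation $P$ is of the admissible type: since $\Kb\in\LOp(\Ss,\Is)$ is a bounded functional, say $\Kb=\scal{k}{\cdot}$ with $k\in\Ss$, and $\ctrlIs{\EB}\in D(\ctrlIs[*]{\EA})'=D(\desCs[*]{\EA})'$, the map $Ph=-\scal{k}{h}\,\ctrlIs{\EB}$ is a rank-one operator $\Ss\to D(\desCs[*]{\EA})'$, exactly the class of possibly unbounded feedback perturbations treated in \cite{XuSallet1996siam}. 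Finally, H3 requires the modal coefficients $\ctrlIs{\mb}_i=\dualdc{\ctrlIs{\EB}}{\desCs[*]{\evec}_i}$ to satisfy $\sum_i |\ctrlIs{\mb}_i/(\eval-\desCs{\eval}_i)|^2\le\tilde M<\infty$ for all $\eval\notin\desCs{D}$, together with the companion estimate at the eigenvalues $\desCs{\eval}_j$ themselves. From $\desCs{\EB}=\BG\ctrlIs{\EB}$ with $\BG\neq 0$ one gets $\ctrlIs{\mb}_i=\BG^{-1}\desCs{\mb}_i$, so \ref{item:xu_sallet_h3} delivers the first estimate with $\tilde M=M/|\BG|^2$, while Lemma~\ref{lem:xu_sallet_h3_2}, rescaled by the factor $|\BG|^{-2}$, delivers the second.

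With H1--H3 established, \cite[Theorem~1]{XuSallet1996siam} immediately yields that $\ctrlIs{\EA}=\desCs{\EA}-\ctrlIs{\EB}\Kb$ is Riesz-spectral, has compact resolvent, and generates a \CN-semigroup, which is the claim. I expect the only point requiring attention to be bookkeeping rather than analysis: Assumption~\ref{hypo:xu_sallet_h3} is phrased for the input operator $\desCs{\EB}$ of the \emph{desired} system, whereas the perturbation in the lemma is built from the intermediate input operator $\ctrlIs{\EB}$, and one must carry the scalar $\BG$ correctly through the estimates; the genuine analytic work — that a rank-one, possibly unbounded perturbation of a discrete Riesz-spectral generator is again a discrete Riesz-spectral generator — is entirely contained in the cited theorem. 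One should additionally record that $\sup_i\Re(\desCs{\eval}_i)<\infty$ is part of the standing setup for the desired dynamics, which is what guarantees that both $\desCs{\EA}$ and the perturbed operator actually generate \CN-semigroups.
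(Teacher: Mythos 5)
Your proposal is correct and follows essentially the same route as the paper: the paper gives no separate proof of this lemma but presents it as a direct consequence of \cite[Theorem~1]{XuSallet1996siam}, with H1--H2 coming from Assumption~\ref{hypo:ad_spec} and H3 supplied by \ref{item:xu_sallet_h3} together with Lemma~\ref{lem:xu_sallet_h3_2}, exactly as you argue. Your additional bookkeeping of the scalar $\BG$ relating $\desCs{\EB}=\BG\ctrlIs{\EB}$ is correct and only makes explicit what the paper leaves implicit.
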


Lemma~\ref{lem:xu_sallet_theorem11} is formulated in terms of $(\desCs{\EA},\ctrlIs{\EB},-\Kb;\ctrlIs{\EA})$,
but it can also be applied in terms of $(\desCs{\EA},\ctrlIs{\EB},-\Kb + \Kb^n;\clS{\EA})$.
Amongst others, this means that $\clS{\EA}$ generates a \CN-semigroup and $\omega(\clS{\EA})=s_p(\clS{\EA})$.
Finally, the convergence of $\clS{\EA}$ against $\desCs{\EA}$ in a
spectral sense is shown.
\begin{lem}
\label{lem:eps}
Let $D^\epsilon_i=\{z\in\Compl\,|\,|z-\desCs{\eval}_i|<\epsilon\frac{d_i}{3};\,\epsilon\in(0,1)\}$ be the disk centered at
$\desCs{\eval}_i$ and $D^\epsilon = \bigcup_{i=1}^\infty D^\epsilon_i$ the union of all such disks.
Under Assumption~\ref{hypo:xu_sallet_h3}
\begin{align}
\label{eq:lem_eps}
\sum_{i=1}^\infty \left|\frac{\desCs{\mb}_i}{\eval - \desCs{\eval}_i}\right|^2
< M\left(4 + \epsilon^{-2}\right) < \infty,
&& \forall\eval\not\in D^\epsilon.
\end{align}
\end{lem}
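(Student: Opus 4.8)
The plan is to argue by cases according to whether $\eval$ lies in the union $D$ of the ``large'' disks $D_i$ (of radius $d_i/3$) or merely outside the union $D^\epsilon$ of the ``small'' ones. If $\eval\notin D$, nothing has to be done: \ref{item:xu_sallet_h3} already gives the sum $\le M\le M(4+\epsilon^{-2})$, since $\epsilon\in(0,1)$. So the work is confined to the case $\eval\in D\setminus D^\epsilon$, where I would isolate the single dangerous term of the sum and estimate the remainder against a nearby point lying outside $D$.

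First I would record that the disks $D_i$ are pairwise disjoint. Indeed, since $d_i,d_k\le|\desCs{\eval}_i-\desCs{\eval}_k|$ for $i\neq k$, a common point $z\in D_i\cap D_k$ would force $|\desCs{\eval}_i-\desCs{\eval}_k|\le|z-\desCs{\eval}_i|+|z-\desCs{\eval}_k|<\tfrac13(d_i+d_k)\le\tfrac23|\desCs{\eval}_i-\desCs{\eval}_k|$, a contradiction. Hence there is a unique index $j$ with $\eval\in D_j$, and $\eval\notin D^\epsilon$ forces $r:=|\eval-\desCs{\eval}_j|\in[\epsilon d_j/3,\,d_j/3)$. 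I would then introduce the radial projection $\eval'=\desCs{\eval}_j+\tfrac{d_j}{3r}(\eval-\desCs{\eval}_j)$ of $\eval$ onto the circle $\partial D_j$, so that $|\eval'-\desCs{\eval}_j|=d_j/3$ and $|\eval-\eval'|=d_j/3-r<d_j/3$. The same triangle-inequality argument as for disjointness shows that $\eval'$ lies outside every $D_i$, i.e.\ $\eval'\notin D$, so \ref{item:xu_sallet_h3} may be applied at $\eval'$.

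It then remains to transport that estimate from $\eval'$ back to $\eval$. For the index $j$ the corresponding term of the sum at $\eval'$ equals $|\desCs{\mb}_j|^2/(d_j/3)^2\le M$, hence $|\desCs{\mb}_j|^2\le M(d_j/3)^2$ and
\[
\left|\frac{\desCs{\mb}_j}{\eval-\desCs{\eval}_j}\right|^2=\frac{|\desCs{\mb}_j|^2}{r^2}\le\frac{M(d_j/3)^2}{(\epsilon d_j/3)^2}=M\epsilon^{-2}.
\]
For $i\neq j$ one has $|\eval'-\desCs{\eval}_i|\ge|\desCs{\eval}_j-\desCs{\eval}_i|-d_j/3\ge\tfrac23 d_j$, and combining this with $|\eval-\eval'|<d_j/3$ gives $|\eval-\desCs{\eval}_i|\ge|\eval'-\desCs{\eval}_i|-d_j/3\ge\tfrac12|\eval'-\desCs{\eval}_i|$; therefore
\[
\sum_{i\neq j}\left|\frac{\desCs{\mb}_i}{\eval-\desCs{\eval}_i}\right|^2
\le 4\sum_{i\neq j}\left|\frac{\desCs{\mb}_i}{\eval'-\desCs{\eval}_i}\right|^2\le 4M,
\]
once more by \ref{item:xu_sallet_h3} evaluated at $\eval'$. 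Adding the two contributions yields \eqref{eq:lem_eps}.

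I expect the only genuine difficulty to be keeping the constant equal to $4$ and not something larger: the naive route of bounding the tail through Lemma~\ref{lem:xu_sallet_h3_2} loses an additional factor and only gives $\tfrac94\cdot 3M$ in place of $4M$. Avoiding this requires exactly the two ingredients above --- comparing $\eval$ with a point \emph{outside} $D$ rather than with an eigenvalue, and using the pairwise disjointness of the $D_i$ to confine the geometric loss to the factor $\tfrac12$ per term. The remaining manipulations are routine.
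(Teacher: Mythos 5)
Your proof is correct and takes essentially the same route as the paper's: both isolate the $j$-th term (bounded by $M\epsilon^{-2}$ via $|\eval-\desCs{\eval}_j|\ge\epsilon d_j/3$ and $|\desCs{\mb}_j|^2\le M(d_j/3)^2$) and bound the remaining sum by $4M$ through comparison with a point on $\partial D_j$, which lies outside $D$ so that \ref{item:xu_sallet_h3} applies. Your factor-$4$ argument via the radial projection and the triangle inequality is just a more explicit rendering of the paper's observation that the maximal and minimal distances from $\partial D_j$ to $\desCs{\eval}_i$ satisfy $\djimax\le 2\,\djimin$.
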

\begin{proof}
Inequality~\eqref{eq:lem_eps}
is fulfilled for $\eval\not\in D$ and it remains to proof the case $\eval\in D \setminus D^\epsilon$.
Consider the decomposition:
\begin{align}
\label{eq:d_eps_ineq_1}
\sum_{i=1}^\infty \left|\frac{\desCs{\mb}_i}{\eval - \desCs{\eval}_i}\right|^2 
&= \sum_{i=1,i\neq j}^\infty \left|\frac{\desCs{\mb}_i}{\eval - \desCs{\eval}_i}\right|^2
{+ \left|\frac{\desCs{\mb}_j}{\eval - \desCs{\eval}_j}\right|^2 }
\end{align}
for the case $\eval\in D_j \setminus D_j^\epsilon$.
First, it will be shown that the first term
can be majorized element-wise by
\begin{align}
\label{eq:eps_majorization}
\left|\frac{\desCs{\mb}_i}{\eval - \desCs{\eval}_i}\right|^2 \le 
4\inf_{\eval^*\in\partial D_j}\left|\frac{\desCs{\mb}_i}{\eval^* - \desCs{\eval}_i}\right|^2,
\end{align}
where $\partial D_j$ is the boundary of $D_j$.
To this end, let $\djimin=\frac{2}{3}d_j + \Delta_i$ and $\djimax=\frac{4}{3}d_j + \Delta_i$
be the minimal respectively the maximal distance from $\partial D_j$ to
$\desCs{\eval}_i$, with $\Delta_i=|\desCs{\eval}_i-\desCs{\eval}_j|-d_j$. The worst case estimate of \eqref{eq:eps_majorization}
can be made with $|\eval - \desCs{\eval}_i|\rightarrow\djimin$ and $|\eval^* - \desCs{\eval}_i|=\djimax$.
Hence, inequality \eqref{eq:eps_majorization} is satisfied for all
possible values $\Delta_i\in[0,+\infty)$, since
$
\djimax \le 2\,\djimin .
$

Back to decomposition
\eqref{eq:d_eps_ineq_1}, also the second term can be majorized using $|\eval-\desCs{\eval}_j|\ge \epsilon\frac{d_j}{3}$
and \ref{item:xu_sallet_h3} can be applied to both terms:
\begin{align*}
\sum_{i=1}^\infty \left|\frac{\desCs{\mb}_i}{\eval - \desCs{\eval}_i}\right|^2 
&\le 4\sum_{i=1,i\neq j}^\infty \left|\frac{\desCs{\mb}_i}{\eval^* - \desCs{\eval}_i}\right|^2
{+ \left|\frac{\desCs{\mb}_j}{\epsilon\,\frac{d_j}{3}}\right|^2 } \\
&< M \big( 4 + \epsilon^{-2}\big).
\end{align*}
\end{proof}
\begin{thm}
\label{thm:conv}
The spectra of $\clS{\EA}$ and $\desCs{\EA}$ are given by $\sigma(\clS{\EA})=\{\clS{\eval}_i\}_1^\infty$ and
$\sigma(\desCs{\EA})=\{\desCs{\eval}_i\}_1^\infty$.
Consider the sequence of disks $\{D^\epsilon_i\}_1^\infty$ from Lemma~\ref{lem:eps}.
For each $\epsilon\in(0,1)$,
there exists an $n_\epsilon$ such that for $n\ge n_\epsilon$,
the spectrum of $\clS{\EA}=\ctrlIs{\EA} + \ctrlIs{\EB} \Kb^{n}$
is close to $\sigma(\desCs{\EA})$ in the following sense.
\begin{enumerate}[ref={\alph*},label={(\alph*)}]
\item\label{item:conv_cont} $\sigma(\clS{\EA})$ is contained in the union
$D^\epsilon=\bigcup_{i=1}^\infty D_i^\epsilon$ of the disks $\{D^\epsilon_i\}_1^\infty$.
\item\label{item:conv_one} Each disk $D^\epsilon_i$ contains one and only one eigenvalue $\clS{\eval}_i\in\sigma(\clS{\EA})$
of simple algebraic multiplicity.
\end{enumerate}
\end{thm}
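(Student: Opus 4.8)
The plan is to regard the closed-loop operator as a rank-one, $\Ss$-unbounded perturbation of the desired one and to localise its spectrum through the associated scalar characteristic function. By \eqref{eq:def_clS_fb} and $\ctrlIs{\EA}=\desCs{\EA}-\ctrlIs{\EB}\Kb$ (Lemma~\ref{lem:xu_sallet_theorem11}) one has, as operators into $D(\desCs[*]{\EA})'$, $\clS{\EA}=\ctrlIs{\EA}+\ctrlIs{\EB}\Kb^{n}=\desCs{\EA}+\ctrlIs{\EB}(\Kb^{n}-\Kb)$. Writing $\Kb^{n}-\Kb=\scal{\cdot}{\delta^{n}}$ with $\delta^{n}=k^{n}-k\to0$ in $\Ss$, and setting $\ctrlIs{\mb}_{i}=\dualdc{\ctrlIs{\EB}}{\desCs[*]{\evec}_{i}}=\desCs{\mb}_{i}/\BG$ and $c^{n}_{i}=\scal{\desCs{\evec}_{i}}{\delta^{n}}$, the Riesz-basis property~\ref{item:spec_riesz} yields a constant $C$ with $\big(\sum_{i}|c^{n}_{i}|^{2}\big)^{1/2}\le C\|\delta^{n}\|\to0$. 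For $\eval\in\rho(\desCs{\EA})$ the resolvent $(\eval-\desCs{\EA})^{-1}$ extends boundedly to $D(\desCs[*]{\EA})'\to\Ss$, and a Sherman--Morrison computation gives that, for such $\eval$, $\eval\in\sigma(\clS{\EA})$ if and only if
\[
F_{n}(\eval):=1-\sum_{i=1}^{\infty}\frac{\ctrlIs{\mb}_{i}\,c^{n}_{i}}{\eval-\desCs{\eval}_{i}}=0,
\]
the multiplicity of the zero coinciding with the algebraic multiplicity of the eigenvalue; by Lemma~\ref{lem:eps} and Cauchy--Schwarz the series converges locally uniformly on $\rho(\desCs{\EA})$, so $F_{n}$ is meromorphic there with poles contained in $\{\desCs{\eval}_{i}\}$. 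Since $\clS{\EA}$ is moreover Riesz-spectral with compact resolvent by Lemma~\ref{lem:xu_sallet_theorem11} (applied to $(\desCs{\EA},\ctrlIs{\EB},-\Kb+\Kb^{n};\clS{\EA})$), its spectrum is a countable set of eigenvalues $\{\clS{\eval}_{i}\}$, which is the first claim.

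For part~\ref{item:conv_cont}, recall that $\sigma(\desCs{\EA})=\{\desCs{\eval}_{i}\}\subset D^{\epsilon}$, so every $\eval\notin D^{\epsilon}$ lies in $\rho(\desCs{\EA})$; there Cauchy--Schwarz and Lemma~\ref{lem:eps} give
\[
|F_{n}(\eval)-1|\le\Big(\sum_{i=1}^{\infty}\frac{|\ctrlIs{\mb}_{i}|^{2}}{|\eval-\desCs{\eval}_{i}|^{2}}\Big)^{1/2}\Big(\sum_{i=1}^{\infty}|c^{n}_{i}|^{2}\Big)^{1/2}<\frac{\sqrt{M(4+\epsilon^{-2})}}{|\BG|}\Big(\sum_{i=1}^{\infty}|c^{n}_{i}|^{2}\Big)^{1/2},
\]
which is $<\tfrac12$ for all $n$ beyond some $n_{\epsilon}$. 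Hence $F_{n}(\eval)\neq0$ and $\eval\in\rho(\clS{\EA})$, i.e.\ $\sigma(\clS{\EA})\subset D^{\epsilon}$.

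For part~\ref{item:conv_one}, the disks $\{D_{i}\}$ are pairwise disjoint (the sum of any two radii $\tfrac{d_{i}}{3},\tfrac{d_{m}}{3}$ is at most $\tfrac23|\desCs{\eval}_{i}-\desCs{\eval}_{m}|$) and $D^{\epsilon}_{j}\subset D_{j}$, so $\partial D^{\epsilon}_{j}\subset\Compl\setminus D^{\epsilon}$; on $\partial D^{\epsilon}_{j}$ one has $|\eval-\desCs{\eval}_{j}|=\epsilon\tfrac{d_{j}}{3}$, and the $i=j$ term in Lemma~\ref{lem:eps} there yields $|\ctrlIs{\mb}_{j}|\le\epsilon\tfrac{d_{j}}{3}|\BG|^{-1}\sqrt{M(4+\epsilon^{-2})}$. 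I would then apply Rouché's theorem on $\partial D^{\epsilon}_{j}$ to $g_{n}(\eval)=(\eval-\desCs{\eval}_{j})F_{n}(\eval)$ and $h(\eval)=\eval-\desCs{\eval}_{j}$: peeling off the $i=j$ term and using Cauchy--Schwarz together with Lemma~\ref{lem:eps} on the remainder,
\[
|g_{n}(\eval)-h(\eval)|\le|\ctrlIs{\mb}_{j}|\,|c^{n}_{j}|+\epsilon\tfrac{d_{j}}{3}\Big(\sum_{i\neq j}\frac{|\ctrlIs{\mb}_{i}|^{2}}{|\eval-\desCs{\eval}_{i}|^{2}}\Big)^{1/2}\Big(\sum_{i=1}^{\infty}|c^{n}_{i}|^{2}\Big)^{1/2}<\frac{2\epsilon d_{j}}{3}\cdot\frac{\sqrt{M(4+\epsilon^{-2})}}{|\BG|}\Big(\sum_{i=1}^{\infty}|c^{n}_{i}|^{2}\Big)^{1/2},
\]
whereas $|h(\eval)|=\epsilon\tfrac{d_{j}}{3}$; thus $|g_{n}-h|<|h|$ on $\partial D^{\epsilon}_{j}$ whenever $2|\BG|^{-1}\sqrt{M(4+\epsilon^{-2})}\big(\sum_{i}|c^{n}_{i}|^{2}\big)^{1/2}<1$ --- a condition not involving $j$, so the same $n_{\epsilon}$ serves all disks. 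By Rouché, $g_{n}$ has exactly one zero in $D^{\epsilon}_{j}$. If $\ctrlIs{\mb}_{j}c^{n}_{j}\neq0$ then $F_{n}$ has a simple pole at $\desCs{\eval}_{j}$, so $g_{n}(\desCs{\eval}_{j})=-\ctrlIs{\mb}_{j}c^{n}_{j}\neq0$ and the zero of $g_{n}$ lies in $D^{\epsilon}_{j}\setminus\{\desCs{\eval}_{j}\}\subset\rho(\desCs{\EA})$; it is then a simple zero of $F_{n}$, hence (by the Sherman--Morrison characterisation, and since $\desCs{\eval}_{j}\notin\sigma(\clS{\EA})$ here, as that would force $\ctrlIs{\mb}_{j}=0$) the unique eigenvalue $\clS{\eval}_{j}$ of $\clS{\EA}$ in $D^{\epsilon}_{j}$, of algebraic multiplicity one. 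In the degenerate case $\ctrlIs{\mb}_{j}c^{n}_{j}=0$ the unique zero of $g_{n}$ is $\desCs{\eval}_{j}$ itself, $F_{n}$ has no zero in $D^{\epsilon}_{j}\setminus\{\desCs{\eval}_{j}\}$, and inspecting the eigenequation directly (the relation $\clS{\EA}\desCs{\evec}_{j}=\desCs{\eval}_{j}\desCs{\evec}_{j}$ if $c^{n}_{j}=0$, or the analogous identity for $\clS[*]{\EA}$ and $\desCs[*]{\evec}_{j}$ if $\ctrlIs{\mb}_{j}=0$) shows that $\desCs{\eval}_{j}$ is the unique, simple eigenvalue of $\clS{\EA}$ in $D^{\epsilon}_{j}$. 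Combined with part~\ref{item:conv_cont}, this proves the theorem.

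The step I expect to be the crux is the uniformity of $n_{\epsilon}$ over all disks $D^{\epsilon}_{j}$: this is exactly what the $\epsilon\tfrac{d_{j}}{3}$ scaling in the definition of $D^{\epsilon}_{j}$ buys, since it makes the factor $d_{j}$ cancel in the Rouché estimate and reduces everything to the single, $j$-independent smallness requirement $2|\BG|^{-1}\sqrt{M(4+\epsilon^{-2})}\big(\sum_{i}|c^{n}_{i}|^{2}\big)^{1/2}<1$, met for large $n$ thanks to Lemma~\ref{lem:eps} and the Riesz-basis bound $\big(\sum_{i}|c^{n}_{i}|^{2}\big)^{1/2}\le C\|k^{n}-k\|\to0$. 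A secondary, purely technical point is the multiplicity bookkeeping in the degenerate coincidences $\ctrlIs{\mb}_{j}c^{n}_{j}=0$, handled above by inspecting the eigenequation rather than $F_{n}$.
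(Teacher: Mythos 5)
Your proof is correct and follows essentially the same route as the paper's: writing $\clS{\EA}$ as a rank-one perturbation of $\desCs{\EA}$, localising the spectrum via the scalar characteristic function with Cauchy--Schwarz and Lemma~\ref{lem:eps} for part~(\ref{item:conv_cont}), and Rouch\'e's theorem on $\partial D_j^\epsilon$ for part~(\ref{item:conv_one}). Your treatment is somewhat more careful than the paper's on two minor points it leaves implicit --- the $\BG$-scaling between $\ctrlIs{\EB}$ and $\desCs{\EB}$, and the degenerate case $\ctrlIs{\mb}_j c^n_j=0$ where the zero of $(\eval-\desCs{\eval}_j)F_n(\eval)$ sits at $\desCs{\eval}_j$ itself --- but these are refinements of the same argument, not a different one.
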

\begin{proof}
The proof is inspired by \cite[Lemma~3 and Lemma~4]{XuSallet1996siam}.
Let $\tilde k^n=k-k^n$ be the approximation error of the feedback $\Kb=\scal{\cdot}{k}$,
then the closed loop operator can be stated as $\clS{\EA}=\desCs{\EA} + \Delta\desCs{\EA}$
with $\Delta\desCs{\EA}=-\ctrlIs{\EB}\scal{\cdot}{\tilde k^n}$.

(\ref{item:conv_cont}):
It is sufficient to show that 
the characteristic function
of $\clS{\EA}$, has no zeros for $\eval\not\in D^\epsilon$.
According to \cite{XuSallet1996siam}
\begin{align*}
g(\eval,n)=1 - \dualc{\ctrlIs{\EB}}{R(\eval, \desCs[*]{\EA})\tilde k^n}, && \eval\not\in\sigma(\desCs[*]{\EA})
\end{align*}
is a characteristic function of $\clS{\EA}$.
Moreover $\tilde k^n$
can be represented by the eigenvectors $\{\desCs[*]{\evec}_i\}_1^\infty$
of $\desCs[*]{\EA}$:
$\tilde k^n=\sum_{i=1}^\infty \desCs[,n]{\tilde{k}}_i\desCs[*]{\evec}_i$.
Then the resolvent $R(\eval,\desCs[*]{\EA})$ of $\desCs[*]{\EA}$
can be expanded into the series
\begin{align*}
R(\eval,\desCs[*]{\EA}) = \sum_{i=1}^\infty \frac{1}{\eval - \desCs{\eval}_i}\scal{\cdot}{\desCs{\evec}_i}\,\desCs[*]{\evec}_i
\end{align*}
and the characteristic function can be written as
\begin{align*}
g(\eval,n)=1 - g_{\epsilon}(\eval,n), &&
g_{\epsilon}(\eval,n) = \sum_{i=1}^\infty \frac{\desCs[,n]{\tilde{k}}_i \, \desCs{b}_i}{\eval - \desCs{\eval}_i}.
\end{align*}
Applying the Cauchy-Schwarz inequality to $g_\epsilon(\eval)$
\begin{align*}
|g_\epsilon(\eval,n)|^2
\le  \sum_{i=1}^\infty {\left|\frac{\desCs{b}_i}{\eval - \desCs{\eval}_i}\right|^2}
\sum_{i=1}^{\infty}{\left|\desCs[,n]{\tilde{k}}_i\right|^2}
\end{align*}
and
using Lemma~\ref{lem:eps}
it becomes clear that there is always an $n_\epsilon$ such that 
\begin{align*}
|g_\epsilon(\eval,n)|
\le  \sqrt{M_\epsilon
\sum_{i=1}^{\infty}\left|\desCs[,n]{\tilde{k}}_i\right|^2}
< 1, && n\ge n_\epsilon,
\end{align*}
since one can always find a constant $M_\epsilon \le M(4+\epsilon^{-2})$
and with $n\rightarrow \infty$ also $\|\tilde k^n\|_\Ss^2=\scal{\tilde k^n}{\tilde k^n}\rightarrow 0$
respectively $\sum_{i=1}^\infty |\desCs[,n]{\tilde{k}}_i|^2\rightarrow 0$.
As a consequence $|g(\eval,n)|>0$ for $n\ge n_\epsilon,\, \eval\not\in D_\epsilon$
and, thus, $\sigma(\clS{\EA})\subset D_\epsilon$.

(\ref{item:conv_one}): Consider $\eval\in D_j^\epsilon$ for fixed $j$ and
$n\ge n_\epsilon$.
Multiplying $g_\epsilon(\eval,n)$ and $g_1(\eval)=1$ by $\eval-\desCs{\eval}_j$,
it follows from the Rouché theorem for holomorphic functions that $g(\eval,n)$ has exactly
one root inside $D_j^\epsilon$, since $|g_\epsilon(\eval,n)|<|g_1(\eval)|$
on the boundary of $D_j^\epsilon$.
\end{proof}

\begin{rem}
Under the assumption that $\ctrlIs{\EB}$ is admissible, i.e., $\{\desCs{\mb_i}\}\in\li$ \cite{XuSallet1996siam},
and some minor modifications of Lemma~\ref{lem:eps} and Theorem~\ref{thm:conv}, one could replace
the disks $\{D^\epsilon_i\}$ with radius $\epsilon\frac{d_i}{3}$, $\epsilon\in(0,1)$,
with the disks $\{\check D^\epsilon_i\}$ with radius $\min(\epsilon,\frac{d_i}{3})$, $\epsilon\in(0,+\infty)$.
This can be useful in cases where $\{d_i\}\not\in\li$.
\end{rem}

\section{Observer gain approximation}
\label{sec:approx_obs}
To derive an appropriate approximation scheme for the observer gain
$\Lc$, the dual result to the one derived in Section~\ref{sec:approx_fb} is
developed. To this end a decomposition $\Lc=\Lu+\Lb$
into an unbounded part $\Lu\in D(\desOs{\EA})'$ and a bounded part $\Lb\in\Ss$ is
required, which will be explained in more detail below.

Often, the design by dynamics assignment of $\Lc$ is done directly for the primal system with boundary observation~\eqref{eq:bo_system} and not for the
adjoint system with boundary control~\eqref{eq:dual_sys}.
However, it is not straightforward to give a general expression for the domain
of the primal system operator.  Therefore, to provide an approximation scheme
for the observer gain, it is more convenient to do this with respect to the
adjoint system.
Nevertheless, for the application of the resulting approximation scheme
it doesn't matter which way the observer gain was derived.

As described in Section~\ref{sec:late_lumping_ooc},
the control law $\yy^*(t) = \OKc\s^*(t)$ assigns the desired dynamics $\desOs{\EA}$
to the boundary control system
$\dot{\s}^*(t)=\SysOpBos^*(\s^*(t),\yy^*(t))$.
Following Section~\ref{sec:approx_fb_part_one}, the feedback gain
has a decomposition
\begin{align*}
\OKc = \OKu + \Lb^*, && \OKu\in\LOp(D(\OA),\Os), && \Lb^*\in\LOp(\Ss,\Os)
\end{align*}
into an unbounded part $\OKu$ and a bounded
part $\Lb^*$.

With respect to the controlled system
\begin{align}
  \label{eq:adj_controlled_bcs}
\dot{\s}^*(t)=\SysOpBos^*(\s^*(t),(\OKu + \Lb^*)\s^*(t))
\end{align}
the adjoint of the observer intermediate system
\begin{multline*}
  \obsIs[*]{\SysOpBos}(\s^*(t),\Lb^*\s^*(t)) = 
  \SysOpBos^*(\s^*(t),(\OKu + \Lb^*)\s^*(t)), \\
   D(\obsIs[*]{\SysOpBos})=\{(h_{\s^*},h_{\yy^*})\in D(\OA)\times\Os \,|\, \obsIs{\OR} h_{\s^*}=h_{\yy^*} \},
\end{multline*}
with $\obsIs{\OR}=\OR - \OKu$,
and the adjoint of the desired observer system
\begin{multline*}
  \desOs[*]{\SysOpBos}(\s^*(t),0) = 
  \obsIs[*]{\SysOpBos}(\s^*(t),\Lb^*\s^*(t)), \\
   D(\obsIs[*]{\SysOpBos})=\{(h_{\s^*},h_{\yy^*})\in D(\OA)\times\Os \,|\, \BG\,\desOs{\OR} h_{\s^*}=h_{\yy^*} \},
\end{multline*}
with $\BG\,\desOs{\OR}=\OR - \OKc$,
can be defined.
Therewith, the adjoint desired system has the following state space representation:
\begin{align*}
  \dot{\s}^*(t) = \desOs[*]{\EA}\s^*(t) = (\obsIs[*]{\EA}+\obsIs[*]{\EC}\Lb^*)\s^*(t), && \obsIs[*]{\EC}\in D(\obsIs{\EA})'
\end{align*}
with
\begin{multline*}
  \desOs[*]{\EA}: \Ss\supset D(\desOs[*]{\EA}) \to \Ss, \\
  D(\desOs[*]{\EA}) = \{h\in D(\OA)\,|\, \desOs{\OR}=0\}
\end{multline*}
and
\begin{align*}
  \obsIs[*]{\EA}: \Ss\supset D(\obsIs[*]{\EA}) \to \Ss, 
  D(\obsIs[*]{\EA}) = \{h\in D(\OA)\,|\, \obsIs{\OR}=0\}.
\end{align*}
The observer intermediate dynamics $\obsIs[*]{\EA}$ is formally given by $\EA^*+\EC^*\Lu^*$,
with
\begin{align*}
  \Lu^*\s^*(t) = \OKu\s^*(t) = \OR\s^*(t), && \s^*(t)\in D(\obsIs[*]{\EA}).
\end{align*}
The primal operator $\desOs{\EA}=\obsIs{\EA}+\Lb\obsIs{\EC}$ is now
defined in terms of the adjoint operator $\obsIs[*]{\EA}+\obsIs[*]{\EC}\Lb^*$.
Therein, $\Lb\in\Ss$, defined by the relation
$\Lb^* = \scal{\cdot}{\Lb}$, can be expanded into the convergent
series:
\begin{align*}
\Lb=\lim_{n\to\infty}\sum_{i=1}^n l_i^n\,\svec^n_i\,\in\,\Ss, && l_i^n\in\Compl.
\end{align*}

Since the perturbation $\Lb \obsIs{\EC}$ of $\obsIs{\EA}$,
with $\Lb\in\Ss$, does not affect the domain of the operator, $D(\obsIs{\EA})=D(\desOs{\EA})$
and $\desOs{\EC}=\BG\,\obsIs{\EC}$.

The required observer can now be derived from the original system dynamics and
the desired observer error dynamics, both written
in terms of the observer intermediate dynamics:
\begin{subequations}
\label{eq:obs_only_cl_sys}
\begin{align}
\label{eq:obs_only_cl_sys_sys}
\dot{\s}(t) &= \EA\s(t) = \obsIs{\EA}\s(t) - \Lu\yy(t)\\
\label{eq:obs_only_cl_sys_err}
\dot{\st}(t) &= \desOs{\EA} \st(t)
= \obsIs{\EA} \st(t) + \Lb\yt(t),
\end{align}
\end{subequations}
with $\yt(t)=\yh(t) - \yy(t)$
and $\st(t)=\sh(t)-\s(t)$.
From \eqref{eq:obs_only_cl_sys}, the observer
\begin{align}
\label{eq:obs_only_cl_sys_obs}
\dot{\sh}(t) &= \obsIs{\EA}\sh(t) - \Lu\yy(t) + \Lb \yt(t)
\end{align}
can be derived.

The observer gain can now be approximated\footnote{
  As for the controller approximation, the determination of $\Lb$ is the most
sophisticated part during the late-lumping design. The authors provided
two modal approximation schemes for $\Lb$, one for a simple analytic system~\cite{Riesmeier2018cdc}
and one for a class of hyperbolic systems~\cite{Riesmeier2021pamm}, cf. Section~\ref{sec:ana} and Section~\ref{sec:hyp}. With these approximation
schemes, the determination of $\Lb$ can be skipped if one is only interested in
the finite-dimensional approximation $\Lb^{n}$.
Within this section no
understanding of these approximation schemes is required
since one can assume that $\Lb$ is explicitly available.} by
series truncation, means, replacing $\Lb$ with
\begin{align}\label{eq:oa_convergent_l}
\Lb^{n}=\sum_{i=1}^n l_i^n\,\svec^n_i\,\in\,\Ss
\end{align}
in \eqref{eq:obs_only_cl_sys}-\eqref{eq:obs_only_cl_sys_obs}.

Therewith, the growth rate of the observer error $\st(t)=\sh(t)-\s(t),\,\,t\ge 0$
is determined by $s_p(\obsIs{\EA}+\Lb^n\obsIs{\EC})$, which is a consequence of
the following lemma.

\begin{lem}
\label{lem:conv_obs}
$\obsIs{\EA} + {\Lb}^n\obsIs{\EC}:\Ss\supset D(\obsIs{\EA})\rightarrow \Ss$ is the
generator of a \CN-semigroup, is Riesz-spectral, has compact resolvent, and
$\sigma(\obsIs{\EA} + {\Lb}^n\obsIs{\EC})$
converges to $\sigma(\desOs{\EA})$ in the sense of Theorem~\ref{thm:conv},
as $n\rightarrow\infty$.
\end{lem}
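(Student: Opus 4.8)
The plan is to deduce the statement from the controller results of Section~\ref{sec:approx_fb} by duality, exploiting that the whole construction of this section was arranged so that the \emph{adjoint} operator $\obsIs[*]{\EA}+\obsIs[*]{\EC}(\Lb^n)^*$ plays exactly the role of the approximated closed-loop operator $\ctrlIs{\EA}+\ctrlIs{\EB}\Kb^n$, under the correspondence $\obsIs[*]{\EA}\leftrightarrow\ctrlIs{\EA}$, $\obsIs[*]{\EC}\leftrightarrow\ctrlIs{\EB}$, $(\Lb^n)^*=\scal{\cdot}{\Lb^n}\leftrightarrow\Kb^n$, $\desOs[*]{\EA}\leftrightarrow\desCs{\EA}$, with Assumption~\ref{item:xu_sallet_h3_dual} taking the place of \ref{item:xu_sallet_h3}. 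First I would record the identity underlying this section, namely that $\obsIs{\EA}+\Lb^n\obsIs{\EC}$ \emph{is} the Hilbert-space adjoint of $\obsIs[*]{\EA}+\obsIs[*]{\EC}(\Lb^n)^*$; the bounded perturbation $\Lb^n\obsIs{\EC}$ leaves the domain untouched, so $D(\obsIs{\EA})=D(\desOs{\EA})$. Consequently, every assertion about the primal operator follows from the corresponding assertion about its adjoint by passing through the conjugation map.

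Second, I would apply Lemma~\ref{lem:xu_sallet_theorem11}, in the form noted right after its statement (the bounded part $-\Kb$ replaced by $(\Lb^n)^*-\Lb^*$, so that $\desOs[*]{\EA}+\obsIs[*]{\EC}\bigl((\Lb^n)^*-\Lb^*\bigr)=\obsIs[*]{\EA}+\obsIs[*]{\EC}(\Lb^n)^*$), to the Riesz-spectral operator $\desOs[*]{\EA}$. Since $\desOs{\EA}$ satisfies Assumption~\ref{hypo:ad_spec}, so does its adjoint $\desOs[*]{\EA}$ (the Riesz-basis, discreteness, and simplicity properties are inherited under taking adjoints on a Hilbert space), hence Hypotheses H1--H2 of~\cite{XuSallet1996siam} hold on the adjoint side, while H3 follows from \ref{item:xu_sallet_h3_dual} via the dual of Lemma~\ref{lem:xu_sallet_h3_2}. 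This yields that $\obsIs[*]{\EA}+\obsIs[*]{\EC}(\Lb^n)^*$ generates a \CN-semigroup, is Riesz-spectral, and has compact resolvent. Passing to adjoints on $\Ss$ then transfers all three properties to $\obsIs{\EA}+\Lb^n\obsIs{\EC}$: the adjoint of a \CN-semigroup is again a \CN-semigroup with the adjoint as its generator~\cite{CurZwa95}; $R(\eval,\obsIs{\EA}+\Lb^n\obsIs{\EC})=R(\cconj{\eval},\obsIs[*]{\EA}+\obsIs[*]{\EC}(\Lb^n)^*)^*$ is compact; and the biorthogonal sequence of a Riesz basis is again a Riesz basis, so $\obsIs{\EA}+\Lb^n\obsIs{\EC}$ is Riesz-spectral with $\sigma(\obsIs{\EA}+\Lb^n\obsIs{\EC})=\{\cconj{\eval}\mid\eval\in\sigma(\obsIs[*]{\EA}+\obsIs[*]{\EC}(\Lb^n)^*)\}$. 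Being Riesz-spectral it satisfies the spectrum-determined growth condition, so $\omega(\obsIs{\EA}+\Lb^n\obsIs{\EC})=s_p(\obsIs{\EA}+\Lb^n\obsIs{\EC})$, which is the property governing the growth of the observer error.

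Third, for the spectral convergence I would apply Theorem~\ref{thm:conv} to $\obsIs[*]{\EA}+\obsIs[*]{\EC}(\Lb^n)^*$. By construction of $\Lb^n$ in~\eqref{eq:oa_convergent_l}, the truncation error $\Lb-\Lb^n$ converges to $0$ in $\Ss$, hence its modal coefficients with respect to $\{\desOs[*]{\evec}_i\}$ are square-summable with sum tending to $0$ as $n\to\infty$ --- precisely the quantity $\sum_i|\desCs[,n]{\tilde{k}}_i|^2$ in the proof of Theorem~\ref{thm:conv}. Up to complex conjugation of the modal quantities, which does not affect the moduli entering the estimates, Assumption~\ref{item:xu_sallet_h3_dual} supplies the summability required there, and one obtains: for each $\epsilon\in(0,1)$ there is an $n_\epsilon$ such that for $n\ge n_\epsilon$ the spectrum of $\obsIs[*]{\EA}+\obsIs[*]{\EC}(\Lb^n)^*$ lies in the union of the disks of radius $\epsilon d_i/3$ centred at $\desOs[*]{\eval}_i$, each such disk containing exactly one, simple, eigenvalue. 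Since complex conjugation is an isometry, $\sigma(\desOs[*]{\EA})=\cconj{\sigma(\desOs{\EA})}$ and the relevant distances $d_i$ agree for $\desOs{\EA}$ and $\desOs[*]{\EA}$; conjugating the conclusion therefore yields convergence of $\sigma(\obsIs{\EA}+\Lb^n\obsIs{\EC})$ to $\sigma(\desOs{\EA})$ exactly in the sense of Theorem~\ref{thm:conv}.

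The only genuinely non-routine point is making this duality dictionary airtight: one has to check that taking adjoints really commutes with the bounded perturbation and with the domain restrictions introduced in this section, so that $\obsIs{\EA}+\Lb^n\obsIs{\EC}$ is literally --- not merely formally, cf.\ Remark~\ref{rem:adc_meaning} --- the adjoint of $\obsIs[*]{\EA}+\obsIs[*]{\EC}(\Lb^n)^*$, using that both operators are closed and densely defined so that biduality recovers the original, and that Hypotheses H1--H2 of~\cite{XuSallet1996siam} are indeed available for $\desOs[*]{\EA}$. Once this identification is in place, the remainder is a transcription of Lemma~\ref{lem:xu_sallet_theorem11} and Theorem~\ref{thm:conv} through the conjugation map.
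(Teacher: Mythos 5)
Your argument is correct and is essentially the paper's own (sketched) proof: both rest on the observation that the observer approximation operator has, up to duality, exactly the structure treated by Lemma~\ref{lem:xu_sallet_theorem11} and Theorem~\ref{thm:conv} via \cite{XuSallet1996siam}, together with Assumption~\ref{item:xu_sallet_h3_dual} in place of \ref{item:xu_sallet_h3}. The only difference is the direction of the identification: the paper notes that $\obsIs{\EA}+\Lb^n\obsIs{\EC}$ already possesses the form of the adjoint closed-loop operator $\desCs[*]{\EA}-\tilde k^n\dualc{\ctrlIs{\EB}}{\cdot}$ for which the results of \cite[Theorem~1]{XuSallet1996siam} are established in an intermediate step, and so applies them to the primal observer operator directly, whereas you pass to the adjoint $\obsIs[*]{\EA}+\obsIs[*]{\EC}(\Lb^n)^*$, match it with $\ctrlIs{\EA}+\ctrlIs{\EB}\Kb^n$, and transport the conclusions back through conjugation --- which forces the extra (routine, and correctly identified by you as the delicate point) verification that generation, Riesz-spectrality, compact resolvent and the spectral localization all survive the passage to adjoints, a step the paper's route avoids.
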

\begin{soproof}
Note that the results in \cite[Theorem~1]{XuSallet1996siam}
are not directly derived for the closed-loop operator $\clS{\EA}$
of Section~\ref{sec:approx_fb} but, in an intermediate step, for its adjoint
\begin{align*}
\clS[*]{\EA}=\desCs[*]{\EA}-\tilde k^n\dualc{\ctrlIs{\EB}}{\cdot}:\,D(\ctrlIs[*]{\EA})\subset\Ss\rightarrow \Ss.
\end{align*}
Obviously, $\obsIs{\EA} + {\Lb}^n\obsIs{\EC}$ possesses the same structure
with $\tilde k^n\in\Ss$ replaced by $\Lb^n\in\Ss$ and 
$\dualc{\ctrlIs{\EB}}{\cdot}\in\LOp(D(\ctrlIs{\EA}),\Is)$ replaced by $\obsIs{\EC}\in\LOp(D(\obsIs{\EA}),\Os)$.
Therefore, in view of Assumption~\ref{item:xu_sallet_h3_dual} and in 
accordance with Lemma~\ref{lem:xu_sallet_theorem11} and Theorem~\ref{thm:conv},
the results obtained for the closed-loop operator $\clS{\EA}$ in Section~\ref{sec:approx_fb} directly translate to $\obsIs{\EA} + {\Lb}^n\obsIs{\EC}$.
\end{soproof}

\section{Analytic case}
\label{sec:ana}

Up to here the theory and the approximation schemes, are developed for the
abstract system class introduced in Section~\ref{sec:prelim}.
To give further insights in terms of the application of the results,
within this section
the implementation of the presented approximation schemes
will be sketched in the context of a class of so-called analytic systems.
This system class will be introduced in the following as the \ac{ac}.

\begin{defn}[The analytic case]
  \label{def:par_case}
Motivated by \cite[Example~2.18]{CurZwa95}, \cite[Property~P4]{Deutscher2013IJC},
within this contribution, one speaks of the \acf{ac},
if for $c>0,\, \omega\in\Reels$ each $\eval\in\sigma(\desS{\EA})$ lies in the
sector $|\text{Im}(\eval)| \le c (\omega - \text{Re}(\eval))$ of the complex plane.
\end{defn}

The sector condition ensures that the operator $\desS{\EA}$ is
analytic. Depending on the literature, the term analytic operator is synonymous with
holomorphic operator or sectorial operator.
Especially the dynamics of the important class of parabolic/diffusion systems can be
described using this type of operator.

In the \ac{ac}, it is in many cases simple to prove Assumption~\ref{hypo:ad_spec},
for example, if $\desCs{\EA}$ can be rewritten in terms of a Sturm-Liouville operator.
Furthermore, if Assumption~\ref{hypo:ad_spec} applies
and the system is of parabolic type ($|\desCs{\eval}_i| \propto i^2$) Assumption~\ref{hypo:xu_sallet_h3}
applies too, at least for admissible input operators $\desCs{\EB}$ ($\{\desCs{\mb}_i\}\in\li$).
But even if $\desCs{\EB}$ is not admissible Assumption~\ref{hypo:xu_sallet_h3}
continues to apply in this case, as long $\desCs{\EB}$ is not ``too unbounded''.
A simple example is the 1-D heat equation with Dirichlet boundary condition
at one boundary and Dirichlet actuation at the other boundary \cite[Example~1 for $\beta=0$]{HoRussel1983}, which is not
admissible but still satisfies Assumption~\ref{hypo:xu_sallet_h3}.

Instead of discussing a detailed example\footnote{Due to a matter of space only one example will be discussed in detail,
which this time is a hyperbolic one, see Section~\ref{sec:hyp}.}, the reader is referred to \cite{Woittennek2017ifac} and \cite{Riesmeier2018cdc}.
The example in \cite{Woittennek2017ifac} consists of a reaction-diffusion-system with a homogeneous Robin boundary
condition at the one boundary, a Neumann actuation on the other boundary and
constant coefficients. The backstepping controller design and approximation
is also treated in \cite{Woittennek2017ifac}, and the backstepping observer
design and approximation can be taken from \cite{Riesmeier2018cdc}.
Only the consideration of the unbounded part of the observer gain has to be
adjusted according to Section~\ref{sec:approx_obs} since it has not been emphasized
in~\cite{Riesmeier2018cdc}. As described in the
Sections~\ref{sec:approx_fb}-\ref{sec:approx_obs}, the stability analysis for both configurations,
can be done by computing the eigenvalues of the closed-loop system.

Other examples for the  \ac{ac}, where late-lumping design is more involved,
are diffusion systems with spatially varying coefficients~\cite{Krstic2008bcp}
or in-domain actuation~\cite{WoittennekWang2014ifac}.

\section{Hyperbolic case}
\label{sec:hyp}

As described in Section~\ref{sec:hyp} for the \ac{ac},
within this section the application of the results of this article will be discussed
in the context of a particular class of hyperbolic systems.
This class will be termed in the following as the \ac{hc}.
Unlike the \ac{ac}, a uniform controller and observer design exists
for the \ac{hc}. Therefore, it is possible to specify general
design parameters for the \ac{hc}, such that the necessary assumptions
for the application of the results derived within this article are fulfilled.

To avoid the necessity of determining the bounded parts $\Kb$ and $\Lb$
of the gains $\Kc$ and $\Lc$
explicitly, the respective design will be simplified using the
late-lumping design technique introduced in~\cite{Woittennek2017ifac,Riesmeier2021pamm},
to directly derive the approximations $\Kb^n$ and $\Lb^n$.

The section is divided into two parts. First,
Subsection~\ref{sec:hyp_gen_aspects} provides the definition
of the \ac{hc} and the necessary background for the
implementation of the approximation schemes.
Second, Subsection~\ref{sec:example_hyperbolic}
shows in detail the application of the results obtained so far.

\subsection{General aspects}
\label{sec:hyp_gen_aspects}
In this subsection, the \ac{hccf} and the \ac{hocf} will be recalled,
and, based on this, the \ac{hc} will be defined.
Furthermore, the related approximation schemes for the state-feedback gain and
the observer output-injection gain will be presented.

\subsubsection{Hyperbolic controller canonical form}
\label{sec:hyp_hccf}

For several hyperbolic systems of the form~\eqref{eq:bc_system}, there exists a bounded
invertible map $\Trafo_\s^\scc:\Ss\to \Ss_\scc$, i.e. a state transformation 
\begin{equation*}
\scc = \Trafo_\s^\scc\, \s \in \Ss_\scc = \Compl^N\times \Lz(\thu, \tho;\Compl),
\end{equation*}
such that in new coordinates $\scc$ the system appears in the \ac{hccf} \cite{Russell1991jiea, WoittennekRudolph2012mathmod}.
The \ac{hccf} describes a system of differential equations consisting of a chain of integrators which is attached to the output of a transport system,
where the system input corresponds to the input of the transport system up to a feedback.
\begin{defn}[\Acl{hccf}]
  \label{def:hccf}
The \ac{hccf} is defined as the following boundary control system
\begin{subequations}
\label{eq:hccf_bc_system}
\begin{align}
\label{eq:hccf_bc_system_dynamics}
\dot{\scc}(t) &= \BA_\scc \scc(t), && \scc(0) = \scc_0 \in \Ss_\scc\\
\label{eq:hccf_bc_system_input}
\uu(t) &= \BR_\scc \scc(t)
\end{align}
with the differential operator
\begin{align}
\BA_\scc h &= (h_2, ..., h_N, h_{N+1}(\thu), \diff{\theta}h_{N+1}) \\
D(\BA_\scc) &= \{h\in\Ss_\scc\,|\,\diff{\theta}h_{N+1}\in\Lz(\thu,\tho;\Compl)\},
\end{align}
the boundary operator
\begin{align}
\label{eq:hccf_bc_bc}
\BR_\scc h &= h_{N+1}(\tho) + \dualf{\acc}{\cconj{h}}{\BA_\scc}, \,\, h\in D(\BA_\scc)\\
\label{eq:hccf_bc_a}
\dualf{\acc}{\cconj{h}}{\BA_\scc} &= \sum_{i=1}^m \accu_i h_{N+1}(\theta_i) + \scale{\accb}{\cconj{h}}, \,\, m\in\Nats 
\end{align}
\end{subequations}
and $\thu = \theta_m \le ... \le \theta_1 < \tho$.
\end{defn}

The corresponding system operator $\EA_\scc$ is given by
\begin{align*}
  \EA_\scc h_\scc = \BA_\scc h_\scc, && h_\scc\in D(\EA_\scc)\!=\!\{h\in D(\BA_\scc)|\BR_\scc h = 0\},
\end{align*}
as described in Section~\ref{sec:prelim:sys_struct}.

  The state variable $\scc_1$ of the \ac{hccf} constitutes a flat output of the given system.
  This can be easily verified as all state variables can be expressed by $\scc_1$, using only time derivatives
  and predictions:
  \begin{align}
    \label{eq:conn_scc_scco}
  \scc(t) = \Big(\scc_1(t),\dot{\scc}_1(t),...,{\scc}_1^{(N-1)}(t),{\scc}_1^{(N)}(t-\theta_-+\cdot)\Big).
  \end{align}
  In many cases, like in the case discussed in \cite{Woittennek2013cpde}, it is possible to determine a flat output\footnote{
    Obviously, $\Phi$ is bounded on $\Ss$ if $N>0$.
  }
  \begin{align}
    \label{eq:conn_fc_cc}
  \fo(t)=\Phi\s(t)=r\,\scc_1(t-\thu), && \Phi\in\LOp( D(\EA),\Is)
  \end{align}
  directly from the original system~\eqref{eq:system}, although this differs from $\scc_1$ by a scaling $r$ and a time shift $\thu$.
  This a-priory knowledge can be used to compute the required state transform to the \ac{hccf},
since the restriction of the flat-output trajectory
$\fo(t)=r\,\scc_1(t-\thu)$ to the interval $t\in(\thu,\tho)$,
defines a state of the system:
\begin{align}
  \label{eq:def_nu_state}
\sfc(t)=\fo(t+\cdot)=\Hn{N}(\thu,\tho;\Compl).
\end{align}
The corresponding state transformation $\sfc(t)=\Trafo_\scc^\sfc \scc(t)$
can be determined by expressing $\scc_1(t-\thu-\theta)$ on the interval $\theta\in (\thu,\tho)$
in terms of $\scc(t)$.

\begin{rem}
  \label{rem:hccf_russel}
It can be observed that the functional $\acc\in D(\BA_\scc)'$,
which determines the dynamics,
can be decomposed into an unbounded part
associated with the point evaluations at $\theta_1, ..., \theta_m\in[\thu,\tho)$
and a bounded part $\accb=(\accb_1,...,\accb_N,\accb_{N+1}(\cdot))\in\Ss_\scc$.
  In fact, functional $\acc$ defined in \eqref{eq:hccf_bc_a},
  could be
  replaced by the even more general form
  \begin{align*}
    \dualf{\acc}{\cconj{h}}{\BA_\scc} = \accu_m h_{N+1}(\thu) + \int_{\thu}^{\tho} h_{N+1}(\theta)\,dv(\theta),
  \end{align*}
  where $v(\theta)$ is a function of bounded variation,
  satisfying additional conditions at the boundaries,
  cf. Russel~\cite[pp.~136-137]{Russell1991jiea}. However, the given form~\eqref{eq:hccf_bc_a} covers various practical relevant dynamics and no additional theory has to
  be introduced.
\end{rem}

\begin{defn}[The hyperbolic case]
  \label{def:hyp_case}
Within this contribution one speaks of the \acf{hc}
if the desired system $(\desCs{\BA},\desCs{\BR})$ resp.\ $(\desOs{\OA},\desOs{\OR})$ can be transformed into the
\ac{hccf}, cf.~\cite{WoittennekRudolph2012mathmod}, Definition~\ref{def:hccf} and Remark~\ref{rem:hccf_russel}.
\end{defn}

In the \ac{ac}, it is, in many cases, simple to proof Assumption~\ref{hypo:ad_spec}, by rewriting
$\desCs{\EA}$ in terms of a Sturm–Liouville operator.
In the \ac{hc}, it can be more difficult to proof/ensure Assumption~\ref{hypo:ad_spec} for $\desCs{\EA}$.
Therefore, in the next subsection, simple conditions
will be provided such that Assumption~\ref{hypo:ad_spec} holds in the \ac{hc}.

\subsubsection{Controller design}
\label{sec:hcc_ctrl_design}
Consider the \ac{hccf} according to Definition~\ref{def:hccf}.
With the feedback
\begin{align}
  \label{eq:hccf_controller}
\uu(t) &= \BKc_\scc\scc(t) = (\BR_\scc - \BG_\scc\desCs{\BR}_\scc)\scc(t),
\end{align}
where
\begin{align*}
\desCs{\BR}_\scc h &= h_{N+1}(\tho) + \dualf{\desCs{\acc}}{\cconj{h}}{\BA_\scc}, \,\, h\in D(\BA_\scc)\\
\dualf{\desCs{\acc}}{\cconj{h}}{\BA_\scc} &= \sum_{i=1}^m \desCs{\accu}_i h_{N+1}(\theta_i) + \scale{\desCs{\accb}}{\cconj{h}}, \,\, m\in\Nats,
\end{align*}
the system dynamics $\EA$ can be compensated and the desired dynamics $\desCs{\EA}$ is
achieved for the closed loop system. Although it is difficult to characterize the set of all possible
$\desCs{\acc}$ such that the closed loop system is exponentially stable,
a particular $\desCs{\acc}$ is given in~\cite{Woittennek2012at} that is derived from
a stable delay differential equation. This one will be discussed later, cf.~\eqref{eq:hcc_delay_diff}.

In view of the results of this article, it is necessary to choose $\desCs{\acc}$
such that $\desCs{\EA}_\scc$ generates a stable semigroup and
Assumption~\ref{hypo:ad_spec} and \ref{item:xu_sallet_h3} are valid in terms of $(\desCs{\EA}_\scc,\desCs{\EB}_\scc)$.
\ref{item:spec_riesz} holds, since according to~\cite{Russell1991jiea}
$\desCs{\EA}_\scc$ is a Riesz-spectral operator
as long as $\desCs{\accu}_m\neq 0$.
To ensure \ref{item:spec_simple} and \ref{item:xu_sallet_h3} it is useful that
the eigenvalue asymptotics of $\desCs{\EA}_\scc$,
given by $\sigma(\desCs{\EA}_\scc|_{\desCs{\accb}=0})$,
fulfils\ref{item:spec_simple} and \ref{item:xu_sallet_h3}, too.
Depending on whether the delays $\tau_i=\theta_i-\tho,\,i=1,...,m$
are commensurate\footnote{Commensurate means, that the delays have a representation
$\tau_i=n_i \tau_0,\,i=1,...,m$, with $n_i\in\Nats,\,i=1,...,m$ and $\tau_0\in\Reels^+$~\cite{MichielsNiculescu2007}.},
the eigenvalues of $\desCs{\EA}_\scc|_{\desCs{\accb}=0}$ are not simple or
can approach each other arbitrarily close.
Both cases can be avoided~\cite{MichielsNiculescu2007} by restricting the attention to the case $\desCs{\accu}_i=0, \,i=1,...,m-1$.
In this case, $\desCs{\EA}_\scc$ is a discrete spectral operator\footnote{
Since $\Ss_\scc$ and the product space used in \cite{Rabah2003crm} are not isometric isomorphic,
there is no state transformation between their elements,
and we can not directly infer properties of $\EA_\scc$ from the generator in \cite{Rabah2003crm}.
But, thanks to \cite[Theorem~2.1.10, Excercise~2.16]{CurZwa95},
the spectrum of $\EA_\scc$ and the spectrum of the generator in \cite{Rabah2003crm} coincide.
}, cf.~\cite[Proposition~2.2]{Rabah2003crm}.
For $\desCs{\EA}_\scc$ stable $\desCs{\accu}_m\in(-1, 1)$ is a necessary condition~\cite{MichielsNiculescu2007}.
But it is again difficult to characterize the general form of the remaining bounded part $\desCs{\accb}$,
such that the resulting dynamics are stable, \ref{item:spec_simple} remains valid, and \ref{item:xu_sallet_h3} is not violated.

To derive a specific $\desCs{\accb}$ the above-mentioned $\desCs{\acc}$, derived from the stable delay differential equation
\begin{align}
  \label{eq:hcc_delay_diff}
  \sum_{i=0}^N \kappa_i\big(\fo^{(i)}(t+\tho) + \mu\fo^{(i)}(t+\thu)\big) = 0,
\end{align}
for the flat output $\fo(t)$, is considered.
It is readily seen that $\desCs{\accu_m} = \mu$ and $\desCs{\accu_i} = 0,\, i=1,...,m-1$.
An explicit formula for $\desCs{\accb}$ can be taken from~\cite[Equation~11]{Woittennek2012at_eng}.
The associated spectrum is composed of a finite and an infinite part: $\sigma(\desCs{\EA}_\scc)=\{\desCs{\eval}_{\kappa,i}\}_1^N\cup\{\desCs{\eval}_{\mu,i}\}^{\infty}_{-\infty}$.
The infinite part $\{\desCs{\eval}_{\mu,i}\}^{\infty}_{-\infty}$ is given by
\begin{align*}
 \desCs{\eval}_{\mu,i} = \Delta\theta^{-1}\left\{
  \begin{array}{ll}
     \ln|\mu| + \mathrm j\, 2 i\pi,\\
     \ln|\mu| +  \mathrm j\, (2 i - 1)\pi,
  \end{array}
   \right.
   &&
  \begin{array}{ll}
     \mu < 0 \\
     \mu > 0,
  \end{array}
\end{align*}
with $\Delta\theta = \tho-\thu$, $i\in\Integers$ and $\mathrm j$ the imaginary unit.
The finite part $\{\desCs{\eval}_{\kappa,i}\}_1^N$ is determined by
the zeros of the polynomial $\sum_{i=0}^N \kappa_i \eval^i$. It can
be placed distinctly from the infinite part as a set of $N$ simple eigenvalues.
\begin{thm}
  \label{thm:hccf_ad}
  Let $\desCs{\EA}_\scc$ be the operator in \ac{hccf}, derived from the delay differential equation~\eqref{eq:hcc_delay_diff},
  with simple eigenvalues $\{\desCs{\eval}_{\kappa,i}\}_1^N$, which are distinct from $\{\desCs{\eval}_{\mu,i}\}^{\infty}_{-\infty}$.
  Then \ref{item:xu_sallet_h3} holds in terms of $(\desCs{\EA}_\scc,\desCs{\EB}_\scc)$.
\end{thm}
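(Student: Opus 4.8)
The plan is to verify Assumption~\ref{item:xu_sallet_h3} directly from the explicit description of $\sigma(\desCs{\EA}_\scc)=\{\desCs{\eval}_{\kappa,i}\}_1^N\cup\{\desCs{\eval}_{\mu,i}\}^{\infty}_{-\infty}$ recalled above, by splitting the modal sum (re-indexed into a single sequence) into its finite and its infinite part. The finite part contributes only $N$ terms, each trivially bounded: every $\eval\notin\desCs{D}$ lies outside the disk $\desCs{D}_i=\{z\mid |z-\desCs{\eval}_i|<d_i/3\}$, so $|\eval-\desCs{\eval}_{\kappa,i}|\ge d_i/3$. Thus the statement reduces to a bound, uniform in $\eval\notin\desCs{D}$, on $\sum_{i\in\Integers}|\desCs{\mb}_{\mu,i}|^{2}\,|\eval-\desCs{\eval}_{\mu,i}|^{-2}$ for the infinite part, for which two ingredients are needed: a uniform \emph{lower} bound on the spectral gaps $d_i$, and a uniform \emph{upper} bound $\sup_i|\desCs{\mb}_{\mu,i}|<\infty$ on the modal input coefficients. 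The latter will be the main obstacle; the former and the summation are elementary geometry.

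For the gap bound, I would first use that, since $\desCs{\acc}$ is derived from~\eqref{eq:hcc_delay_diff}, the characteristic function of $\desCs{\EA}_\scc$ factors into the polynomial $P(\eval)=\sum_{i=0}^{N}\kappa_i\eval^{i}$ and the quasi-polynomial $e^{\eval\Delta\theta}+\mu$; hence the infinite part $\{\desCs{\eval}_{\mu,i}\}$ sits \emph{exactly} on the vertical line $\Re(\eval)=\Delta\theta^{-1}\ln|\mu|$ and is equispaced with gap $2\pi/\Delta\theta$, as recorded above. Consecutive eigenvalues of the infinite part are thus at distance $2\pi/\Delta\theta>0$; by hypothesis the finite part $\{\desCs{\eval}_{\kappa,i}\}_1^N$ is simple and disjoint from this closed discrete line, hence at a strictly positive distance $\delta$ from it and with a strictly positive internal gap $\delta'$. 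Consequently $d_i\ge\underline d:=\min\{2\pi/\Delta\theta,\delta,\delta'\}>0$ for every $i$. Combining this with the geometry of equispaced points on a line — isolating the one (or two) indices of the infinite part whose imaginary part is closest to $\text{Im}(\eval)$, for which $|\eval-\desCs{\eval}_{\mu,i}|\ge\underline d/3$ holds, and comparing the remaining indices with the convergent series $2\sum_{m\ge1}(m\pi/\Delta\theta)^{-2}$ — one obtains $\sum_{i\in\Integers}|\eval-\desCs{\eval}_{\mu,i}|^{-2}\le 9\underline d^{-2}+\Delta\theta^{2}/3=:C_1$, uniformly in $\eval\notin\desCs{D}$.

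It remains to bound the modal input coefficients of the infinite part, which is where the work lies. For this I would compute the eigenfunctions explicitly: from $\BA_\scc\evec=\eval\evec$ one reads off the eigenvector $\evec=(1,\eval,\dots,\eval^{N-1},\eval^{N}e^{\eval(\cdot-\thu)})$ up to a scalar, with $\eval$ ranging over the zeros of the characteristic function, and since $\desCs{\EB}_\scc$ is, up to a fixed constant, the point evaluation at $\theta=\tho$ of the last (function) component, $\desCs{\mb}_i$ equals the corresponding boundary trace of the adjoint eigenvector $\desCs[*]{\evec}_{\scc,i}$, normalized by the biorthonormality $\scale{\desCs{\evec}_{\scc,i}}{\desCs[*]{\evec}_{\scc,i}}=1$. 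A direct, but delicate, computation with these explicit eigenfunctions then shows $\sup_i|\desCs{\mb}_{\mu,i}|<\infty$: the decisive point is that along the infinite part $\Re(\desCs{\eval}_{\mu,i})$ is constant (equal to $\Delta\theta^{-1}\ln|\mu|$), so that every $\Lz(\thu,\tho)$-norm of $e^{\pm\desCs{\eval}_{\mu,i}(\cdot)}$ and every relevant boundary trace of the eigen- and adjoint eigenfunctions stays bounded above and below uniformly in $i$, while the polynomial-in-$\desCs{\eval}_{\mu,i}$ growth that the chain of $N$ integrators would otherwise introduce is exactly absorbed by the biorthonormalization (equivalently, boundary inputs to the transport chain underlying the \ac{hccf} are admissible). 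If convenient, one may first carry this out for $\desCs{\EA}_\scc|_{\desCs{\accb}=0}$, where the computation is transparent, and then note that the exact bounded correction $\desCs{\accb}$ of~\cite{Woittennek2012at_eng} perturbs the eigenvalues and the modal coefficients only by $\ell^1$-summable amounts, so the uniform bound persists. Putting the pieces together, $\sum_{i=1}^{\infty}|\desCs{\mb}_i|^{2}\,|\eval-\desCs{\eval}_i|^{-2}\le(\sup_i|\desCs{\mb}_{\mu,i}|)^{2}\,C_1+9\underline d^{-2}\sum_{i=1}^{N}|\desCs{\mb}_{\kappa,i}|^{2}=:M<\infty$ for all $\eval\notin\desCs{D}$, which is exactly the bound demanded by~\ref{item:xu_sallet_h3}; the balancing of the polynomial growth against the biorthonormalization in this modal-coefficient estimate is the sole nontrivial ingredient.
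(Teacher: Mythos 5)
Your proposal is correct and follows essentially the same route as the paper: a uniform bound on the modal input coefficients, a split of the sum into the finite part $\{\desCs{\eval}_{\kappa,i}\}_1^N$ and the equispaced infinite part $\{\desCs{\eval}_{\mu,i}\}$ on the vertical line, and a comparison of the latter with a convergent series using the constant gap $2\pi/\Delta\theta$. The only real difference is that the paper dispatches your ``sole nontrivial ingredient'' in one line --- the boundary input to the \ac{hccf} is admissible by \cite{Russell1991jiea}, hence $\{\desCs{\mb}_i\}\in\li$ --- so the delicate eigenfunction/biorthonormalization computation you sketch (but do not carry out) can be replaced by that citation, which you yourself note parenthetically.
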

\begin{proof}
  The input operator is admissible and therewith $\{\desCs{\mb}_i\}\in\li$, cf.~\cite{Russell1991jiea}.
  First, we exploit the admissibility.
  With $\desCs{\mb}_{\sup} = \sup_{i\in\Nats}\desCs{\mb}_i$
  \begin{align*}
    \sum_{i=1}^\infty \left|\frac{\desCs{\mb}_i}{\eval - \desCs{\eval}_{i}}\right|^2 \le
    \sum_{i=1}^\infty \left|\frac{\desCs{\mb}_{\sup}}{\eval - \desCs{\eval}_{i}}\right|^2.
  \end{align*}
  From the decomposition
  \begin{align*}
    \sum_{i=1}^\infty \left|\frac{\desCs{\mb}_{\sup}}{\eval - \desCs{\eval}_{i}}\right|^2 =
    \sum_{i=1}^N \left|\frac{\desCs{\mb}_{\sup}}{\eval - \desCs{\eval}_{\kappa,i}}\right|^2 +
    \sum_{i=-\infty}^\infty \left|\frac{\desCs{\mb}_{\sup}}{\eval - \desCs{\eval}_{\mu,i}}\right|^2
  \end{align*}
  it is obvious that, for the finite part of the sum, a constant $M_\kappa\in\Reels^+$ exists such that 
  \begin{align*}
    \sum_{i=1}^N \left|\frac{\desCs{\mb}_{\sup}}{\eval - \desCs{\eval}_{\kappa,i}}\right|^2 \le M_\kappa < \infty,
    && \forall \eval\not\in D,
  \end{align*}
  with $D$ according to \ref{item:xu_sallet_h3}.
  Consider the sequence $\{|\eval-\desCs{\eval}_{\mu,i}|\}_1^\infty$
  and the sequence  $\{\frac{d}{4}i\}_1^\infty$,
  with the distance $d=2\pi\Delta\theta^{-1}=|\desCs{\eval}_{\mu,i} - \desCs{\eval}_{\mu,i+1}|$ between two adjacent eigenvalues.
  It becomes clear that for each $\eval\not\in D$ there is at least one
  reordering of $\{\desCs{\eval}_{\mu,i}\}_1^\infty$, such that
  \begin{align*}
  |\eval-\desCs{\eval}_{\mu,i}| \ge \frac{d}{4}i, && i\ge 1.
  \end{align*}
  This implies the estimate
  \begin{align*}
    \sum_{i=-\infty}^\infty \left|\frac{\desCs{\mb}_{\sup}}{\eval - \desCs{\eval}_{\mu,i}}\right|^2
    <
    2\sum_{i=1}^\infty \left|\frac{\desCs{\mb}_{\sup}}{\frac{d}{4}i}\right|^2
    =
    M_\mu < \infty,
    && \forall \eval\not\in D
  \end{align*}
  and the theorem is proven with $M=M_\kappa + M_\mu<\infty$.
\end{proof}

\subsubsection{Modal feedback approximation}
\label{sec:hcc_ctrl_approx}
Let
\begin{align*}
\uu(t)=\BKc_\scc \scc(t), && \BKc_\scc = \adverseComp{\BKu}_\scc +\adverseComp{\Kb}_\scc
\end{align*}
be the designed controller~\eqref{eq:hccf_controller}, with
$\adverseComp{\BKu}_\scc$ unbounded and $\adverseComp{\Kb}_\scc$ bounded.
Remark that the decomposition
$\BKc_\scc = \adverseComp{\BKu}_\scc +\adverseComp{\Kb}_\scc$ is not unique, cf.~\cite{Woittennek2017ifac}, and
the exact realization $\adverseComp{\BKu}\s(t) = \adverseComp{\BKu}_\scc\Trafo_\s^\scc\s(t)$ may be difficult in original coordinates.
In many cases, as in the example discussed in Section~\ref{sec:example_hyperbolic},
one can state a
$\BKu=\adverseComp{\BKu}+\Delta\adverseComp{\BKu}$, with
$\Delta\adverseComp{\BKu}$ bounded, such that the feedback $\BKu\s(t)$ can be realized exactly.
This can be achieved for example via measurement or an appropriate adjustment of the plant.
Furthermore, for the approximation of the feedback 
\begin{align}
  \label{eq:control_law_scc}
\uu(t) = \BKu\s(t) + \Kb_\scc\scc(t), && \Kb_\scc=\adverseComp{\Kb}_\scc-\Delta\adverseComp{\BKu}\,\Trafo_\scc^\s,
\end{align}
neither $\Delta\adverseComp{\BKu}$
nor $\Trafo_\scc^\s$ need to be calculated explicitly.

For a simple representation of the feedback approximation scheme to be developed,
the control law~\eqref{eq:control_law_scc} is rewritten as
\begin{align}
  \label{eq:control_law_sfc}
  \uu(t) = \BKu\s(t) + \Kb_\sfc\sfc(t), && \Kb_\sfc=\Kb_\scc\,\Trafo_\sfc^\scc,
\end{align}
with the state $\sfc(t)$, introduced in~\eqref{eq:def_nu_state}.
The state $\sfc(t)$ represents a section of the
flat output $\fo(t+\cdot)$ and as such, the transformations between
$\sfc(t)$ and $\scc(t)$ are easy to compute, cf.\ Equation~\eqref{eq:conn_fc_cc}.

For the \ac{hc} with a control law of the form \eqref{eq:control_law_sfc}, the
efficient feedback approximation according to~\cite{Woittennek2017ifac} will
be described in the following.
As pointed out in Section~\ref{sec:approx_fb}, only the bounded part of the control law will be approximated:
\begin{align*}
  \Kb_\sfc\sfc(t) \approx \Kb_\sfc\Trafo_\s^\sfc\s^n(t) = \sum_{i=1}^{n}\ms_i^n(t)\underbrace{\Kb_\sfc\Trafo_\s^\sfc\svec_i^n}_{k_i^n}, && k_i^n\in\Compl,
\end{align*}
cf.~\eqref{eq:fa_ctrl_approx_scheme}.
It remains to compute the feedback gains
$\{k_i^n\}$ and, therefore, the transformations $\{\svec_{\sfc,i}^n=\Trafo_\s^\sfc\svec_i^n\}$
have to be known.
To avoid the explicit computation of $\Trafo_\s^\sfc$, which is the most difficult part of this design method,
one can choose the modal basis $\{\svec_i^n = \evec^\star_i\}$, constituted by the eigenvectors
$\{\evec^\star_i\}$ of the open loop system (if $\EA$ is Riesz-spectral),
the controller intermediate system\footnote{
According to Lemma~\ref{lem:xu_sallet_theorem11}, the system operator of the
controller intermediate system $\ctrlIs{\EA}$ is Riesz-spectral.
} $\star=\ctrlIsText$
or the desired system $\star=\desCsText$.
For fixed $\star$ (dynamics) and arbitrary $\bullet$ (coordinates) the solution
$\bullet(t)=$ can be expressed by a linear combination of the eigenvectors $\{\evec^\star_{\bullet,i}\}_1^\infty$:
\begin{align*}
\bullet(t) = \sum_{i=1}^\infty \ms_i^\star(t)\evec^\star_{\bullet,i}\in\Ss_\bullet,
\end{align*}
with the modal state
\begin{align*}
  \ms_i^\star(t) = \scal{\s(t)}{\evec_i^{\star*}} = \scalf{\bullet}{\evec_{\bullet,i}^{\star*}}{\Ss_\bullet}, && i\in\Nats.
\end{align*}
Since the modal state
does not depend on the choice of the coordinates, it doesn't matter
in which coordinates the feedback gains
\begin{align*}
k_i^\star=k_i^n|_{\svec_i^n = \evec^\star_i}=\Kb_\bullet\evec^\star_{\bullet,i}, && i=1,...,n
\end{align*}
will be determined. Therefore, the explicit knowledge of the transformation $\Trafo_\s^\sfc$ is not
required if the transformations $\evec^\star_{\sfc, i}=\Trafo_\s^\sfc\evec^\star_i$ of the elements $\evec^\star_i$ are known.
Observe that these elements correspond to particular solutions $\s(t)=\evec_i^\star e^{\eval_i^\star t}$
of the initial value problem
$\dot\s(t) = \EA^\star\s(t),\,\s(0)=\evec_i^\star$ evolving exponentially in time.
With the state $\sfc(t)$ -- see Definition~\eqref{eq:def_nu_state} --
the transformed eigenvector $\evec^\star_{\sfc, i}=\Trafo_\s^\sfc\evec^\star_i$ can be determined
by looking at the respective modal part $\s(t)=\evec_i^\star e^{\eval_i^\star t}$ separately:
\begin{align*}
\fo(t+\theta) = (\Phi\s)(t+\theta) = (\Phi\evec_i^\star) e^{\eval_i^\star \theta}e^{\eval_i^\star t}.
\end{align*}
They are finally given by
$\{\evec_{\sfc,i}^\star\in\Ss_\sfc\,|\,\evec_{\sfc,i}^\star(\theta)=(\Phi\evec_i^\star) e^{\eval_i^\star \theta}\}$.
In summary, the approximation scheme reads
\begin{align*}
  \uu(t) \approx \BKu\s(t) + \sum_{i=1}^{n}\ms_i^\star(t)k_i^\star,
\end{align*}
with
\begin{align*}
  \ms_i^\star(t)=\scal{\s(t)}{\evec_i^{\star*}}, && k_i^\star = \Kb_\sfc \evec_{\sfc,i}^\star.
\end{align*}

\subsubsection{Hyperbolic observer canonical form}
\label{sec:hcc_hocc}
For several hyperbolic systems~\eqref{eq:system} exists a state transformation
\begin{align*}
\soc = \Trafo_\s^\soc\, \s \in \Ss_\soc = \Compl^N\times \Lz(\thu, \tho;\Compl), && \Trafo_\s^\soc:\Ss\to\Ss_\soc
\end{align*}
such that the system appears in new coordinates $\soc$ in the \ac{hocf}.
The \ac{hocf} generalizes the concept of the observer canonical form known from
finite-dimensional systems. It can be derived for example from the
input-output relation/equation,
cf.~\cite{Woittennek2013nds}.
It is defined as the adjoint form of the \ac{hccf}.
\begin{defn}[\Acl{hocf}]
  \label{def:hcof}
The \ac{hocf} is a system representation with boundary observation
\begin{align*}
(\dot{\soc}(t), \yy(t)) = \SysOpBos[,\soc]\soc(t) = (\EA_\soc \soc(t),\EC_\soc \soc(t))
\end{align*}
such that, the adjoint operator $\SysOpBos[,\soc]^*$ defines a system in \ac{hccf}.
\end{defn}

Note that $\EA_\soc$ can be formally interpreted as
differential operator
\begin{align}
  \label{eq:hocf_ea}
\EA_\soc h &= (0,h_1, ..., h_{N-1}, -\diff{\theta}h_{N+1}) - \acc\EC_\soc h, && h\in D(\EA_\soc)
\end{align}
with $\acc\in D(\EA_\soc^*)'$ and the output operator
\begin{align}
\EC_\soc h &= h_{N+1}(\tho), && h\in D(\EA_\soc).
\end{align}
This can be interpreted as a chain of integrator connected to the input
of a transport system, while both are perturbed by the output of the transport system,
via the output injection $\acc$.

\subsubsection{Observer design}
\label{sec:hcc_obs_design}
In the coordinates $\soc$ the design of an observer
\begin{align*}
\dot{\hat{\soc}}(t) &= \EA_\soc\hat{\soc}(t) + \Lc_\soc\yt(t) \\
\yt(t)&=\yh(t)-\yy(t) = \EC_\soc\hat{\soc}(t) - \EC_\soc{\soc}(t)
\end{align*}
is particular simple, since the observer gain is always of the form
\begin{align}
  \label{eq:obs_gain_hocf}
\Lc_\soc = \acc - \desOs{\acc},
\end{align}
which compensates the system dynamics $\EA$, c.f.\ \eqref{eq:hocf_ea}, and assigns the desired
dynamics $\desOs{\EA}$ to the observer error system.
Since Theorem~\ref{thm:hccf_ad}, written in terms of $(\desCs{\EA}_\scc,\desCs{\EB}_\scc)$,
also applies in terms of $(\desOs[*]{\EA}_\soc,\desOs[*]{\EC}_\soc)$,
the construction of $\desOs{\acc}$ follows the same lines that applied to the
construction of $\desCs{\acc}$ in the \ac{hccf},
see Section~\ref{sec:hcc_ctrl_design}.

\subsubsection{Modal observer gain approximation}
\label{sec:hcc_obs_approx}
Let
$\Lc_\soc = \adverseComp{\Lu}_\soc + \adverseComp{\Lb}_\soc$ be
the designed observer gain~\eqref{eq:obs_gain_hocf}, with
$\adverseComp{\Lu}_\soc\in D(\EA^*_\soc)'$ and $\adverseComp{\Lb}_\soc\in\Ss_\soc$.
As described in Section~\ref{sec:approx_obs} for a similar decomposition of the observer gain, the observer should be rewritten in the form
\begin{align}
  \label{eq:obs_hat_adverse}
\dot{\hat{\soc}}(t) &= \obsIs{\adverseComp{\EA}}_\soc\hat{\soc}(t) - \adverseComp{\Lu}_\soc\yy(t) + \adverseComp{\Lb}_\soc\yt(t),
\end{align}
with $\obsIs{\adverseComp{\EA}}_\soc = \EA_\soc+\adverseComp\Lu_\soc\EC_\soc$, in order to derive
an approximation,
ensuring the convergence of the spectrum of the closed-loop system
in the sense of Lemma~\ref{lem:conv_obs}.

To avoid the explicit computation of the transformation $\Trafo_\s^\soc$ to the \ac{hocf}, the observer will be approximated in original coordinates
\begin{align}
  \label{eq:obs_hat_adverse_s}
\dot{\hat{\s}}(t) &= \obsIs{\adverseComp{\EA}}\hat{\s}(t) - \adverseComp{\Lu}\yy(t) + \adverseComp{\Lb}\yt(t).
\end{align}
However, since the unbounded part of the observer gain $\adverseComp{\Lu}=\Trafo_\soc^\s\adverseComp{\Lu}_\soc$ was designed in the \ac{hocf},
it is not necessarily simple to express in original coordinates.
Therefore, similarly to the feedback design, it is advantageous to
introduce another unbounded operator $\Lu=\adverseComp{\Lu} + \Delta\adverseComp{\Lu}$,
with $\Delta\adverseComp{\Lu}\in\Ss$,
such that ${\obsIs{\EA}} = \EA+\Lu\EC$ can be stated without the explicit knowledge of~$\Trafo_\soc^\s$.
This way, an observer approximation can be derived from
\begin{align*}
\dot{\hat{\s}}(t) &= {\obsIs{\EA}}\hat{\s}(t) - {\Lu}\yy(t) + {\Lb}\yt(t), && \Lb=\adverseComp{\Lb} - \Delta\adverseComp{\Lu}.
\end{align*}
However, for the approximation of the bounded operator $\Lb$ it is easier to switch back to
\ac{hocf} coordinates. In these coordinates, the observer-feedback gain
\begin{align*}
{\Lb}_\soc = \Trafo_\s^\soc \Lb = \obsIs{\acc} - \desOs{\acc} \in \Ss_\soc
\end{align*}
compensates the dynamics $\obsIs{\EA}$, determined by $\obsIs{\acc}$, of the
intermediate system and assigns the desired dynamics $\desOs{\EA}$, determined
by $\desOs{\acc}$.

While the controller approximation in Section~\ref{sec:hcc_ctrl_approx} was stated for arbitrary
modal basis elements $\{\evec^\star_{i}\}$, in the following, the observer gain approximation
will be derived for the eigenvectors $\{\obsIs{\evec}_{i}\}$ of the observer intermediate system only,
since the approximation scheme becomes particular simple in this case, see \eqref{eq:ubo_approx}.
The observer gain approximation
\begin{align}
  \label{eq:bounded_obs_gain_approx}
\Lb^n=\sum_{i=1}^n \obsIs{l}_i\,\obsIs{\evec}_i\,\in\,\Ss, && \obsIs{l}_i=l_i^n|_{\svec^n_i=\obsIs{\evec}_i}
\end{align}
is determined by
\begin{align*}
\obsIs{l}_i&=\scal{\Trafo_\soc^\s\Lb_\soc}{\obsIs[*]{\evec}_i}=\scalf{\Lb_\soc}{\Trafo_\soc^{\s*}\obsIs[*]{\evec}_{i}}{\Ss_\soc},
\end{align*}
where $\Trafo_\soc^{\s*}$, as the adjoint of a transformation to \ac{hocf}, is a transformation to \ac{hccf}.
As a consequence, the transform
$\obsIs[*]{\evec}_{\soc,i}=\Trafo_\soc^{\s*}\obsIs[*]{\evec}_{i}$ of
$\obsIs[*]{\evec}_{i}$ is not only an eigenvector of $\obsIs[*]{\EA}_\soc$ but also of $\obsIs{\EA}_\scc$,
where $\obsIs{\EA}_\scc$ describes the dynamics $\obsIs{\EA}$ of the observer intermediate
system in \ac{hccf} coordinates, cf.~\eqref{eq:hccf_bc_system}.
The structure of these eigenvectors
is always given by
\begin{align}
  \label{eq:evec_o_trafos}
  \obsIs[*]{\evec}_{\soc,i}=r_i(1,\cconj{\obsIs{\eval}}, ..., \cconj{\obsIs{\eval}}^{N-1}, \theta\mapsto e^{\cconj{\obsIs{\eval}}\,\theta}\,\cconj{\obsIs{\eval}}^N),
\end{align}
where $\cconj{\obsIs{\eval}}$ is the conjugate complex of $\obsIs{\eval}$.
It remains to compute the correct scaling $r_i$, such that $\obsIs[*]{\evec}_{\soc,i}=\Trafo_\soc^{\s*}\obsIs[*]{\evec}_{i}$ holds.
To this end, one can compute the flat output $\foa$
of the adjoint system
in terms of the original state $\foa=\Psi\s^*$
as well as in terms of the transformed state $\foa=\Psi_\soc\soc^*$
and adjust $r_i$
such that $\Psi_\soc\obsIs[*]{\evec_{\soc,i}}=\Psi\obsIs[*]{\evec_{i}}$.
\begin{rem}
  \label{rem:det_dual_flat_outs}
  For the calculation of the flat output $\foa=\Psi_\soc\,\soc^*$,
  the following general formula
  can be used: For $h=(h_1,...,h_N,h_{N+1}(\cdot))\in\Ss_\soc$,
\begin{align}
  \label{eq:hccf_flat_out}
  \Psi_\soc h = \left\{
    \begin{aligned}
    &h_1(0) = h(0)  && \text{for}\,\,N=0 \\[5pt]
    &\begin{aligned}
    &\sum_{i=0}^{N-1} \frac{(-\thu)^i}{i!}h_{i+1} &\\
    &+ \int_{\thu}^0 \frac{(-\theta)^{N-1}}{(N-1)!} h_{N+1}(\theta)\,d\theta
    \end{aligned} && \forall\, N \in \Nats\setminus\{0\}.
    \end{aligned}
  \right.
\end{align}
However, the corresponding operator $\Psi$ in the original coordinates has to be
determined by calculating the flat output $\foa=\Psi\s^*$ of each individual adjoint
system $\dot\s^*(t) = \obsIs[*]{\SysOpBos}(\s^*(t), \yy^*(t))$.
Of course, $\obsIs[*]{\SysOpBos}$ is the adjoint of the observer intermediate system operator $\obsIs{\SysOpBos}$
and not from the original system operator $\SysOpBos$.
\end{rem}

The last challenging part, the determination of $\obsIs{\acc}$, simplifies in
the context of this specific modal approximation to a point evaluation of $\obsIs[*]{\evec_{\soc,i,N+1}}$:
\begin{align}
\label{eq:ubo_approx}
\obsIs{l}_i&=\scalf{\obsIs{\acc} - \desOs{\acc}}{\obsIs[*]{\evec_{\soc,i}}}{\Ss_\soc} \\
&=-\cconj{\obsIs[*]{\evec_{\soc,i,N+1}}(\tho)} - \dualf{\desOs{\acc}}{\obsIs[*]{\evec_{\soc,i}}}{\obsIs[*]{\EA}_\soc},
\end{align}
cf.~\eqref{eq:hccf_bc_bc} and \cite{Riesmeier2021pamm}.

\subsection{Application}
\label{sec:example_hyperbolic}
To demonstrate the application of the theory and the proposed approximation
schemes, consider the simple\footnote{
This example was deliberately chosen, to focus on
the application of the presented approximation methods and the consequences of
the obtained results.
Furthermore, for this simple example, the state transformations, the state
feedback and the observer gain, whose explicit calculation will be
avoided in the following, can still be calculated relatively straightforward, cf.~\cite{Woittennek2012at}.
While this is a nice feature to verify the proposed approximation schemes for this
example, the approximation method can be applied in the same way to
systems, where the explicit computation of the transformations and gains is
more involved, see for example \cite{WoittennekWang2014ifac} and \cite{EcklebeEtal2017}.
}\footnote{
  Remark that, for the controller and observer design, also the backstepping method
  can be applied~\cite{KernEtal2018,GehringWoittennek2021}.
} hyperbolic system%
\begin{subequations}
\label{eq:ex_sys}
\begin{align}
\begin{split}
\label{eq:ex_dgl1}
\diff{t}\sw_1(z,t) &= \ca \diff{z}\sw_2(z,t)
\end{split} \\
\begin{split}
\label{eq:ex_dgl2}
\diff{t}\sw_2(z,t) &= \cb \diff{z}\sw_1(z,t)
\end{split} \\
\begin{split}
\label{eq:ex_dgl3}
\diff{t}\sw_3(t) &= \cc \sw_2(0,t)
\end{split}
\end{align}
with boundary conditions
\begin{align}
\label{eq:ex_bc}
\sw_3(t) = \sw_1(0,t), && \uu(t) = \sw_2(1,t),
\end{align}
\end{subequations}
input $\uu(t)$, and output $\yy(t) = \sw_1(1,t)$. This model can be used
to describe the linearized dynamics of an undamped pneumatic system \cite{Gehring2018mathmod}.
For this example, the design and approximation steps for the state feedback and the state
observer (with $\uu(t)=0$) will be shown separately.
Introducing the state
\[\s(t)=(\sw_1(\cdot,t),\sw_2(\cdot,t),\sw_3(t))\in\Ss=\Lz(0,1;\Compl^2)\times\Compl,\]
\eqref{eq:ex_sys} can be written as system with boundary control \eqref{eq:bc_system} with
\begin{align*}
D(\BA)  \! &=  \! \{(h_1, h_2, h_3) \!\in \!\Ss|\diff{z}h_1, \diff{z}h_2\!\in\!\Lz(0,1),h_3\!=\!h_1(0)\}\\
\BA h \! &=  \! 
(\ca\diff{z}h_2,\, \cb\diff{z}h_1,\, \cc h_2(0)), \qquad h\in D(\BA) \\
\BR h \! &=  \! h_2(1)
\end{align*}
or in state space representation \eqref{eq:system} with
\begin{align*}
D(\EA) &= \{(h_1, h_2, h_3) \in D(\BA)\,|\,h_2(1)=0\}\\
\EA\,h &= \BA\,h , \qquad\qquad\qquad h\in D(\EA) \\
\EB &= (\ca\,\Dirac{1}(\cdot),0,0).
\end{align*}
For the state space representation~\eqref{eq:bo_system} of the autonomous system with output it remains to specify the output operator:
\begin{align*}
\EC h &= h_1(1), && h\in D(\EA).
\end{align*}

\subsubsection{Controller design and approximation}
\label{sec:example_hyp_des_appr}
A flat output of the system is given by $\fo(t)=\Phi\s(t)=\sw_3(t)$.
The parameterization of the solutions by this flat output
\begin{subequations}
\label{eq:flat_param}
\begin{align}
\begin{split}
\label{eq:flat_param_w1}
\sw_1(z,t) &=\frac{1}{2}\big(\fo(t+\tau z) + \fo(t-\tau z)\big) \\
&\hphantom{=}+\frac{1}{2\cb\cc\tau}\big(\dot\fo(t+\tau z)-\dot\fo(t-\tau z)\big)
\end{split}\\
\begin{split}
\label{eq:flat_param_w2}
\sw_2(z,t) &=\frac{\cb\tau}{2}\big(\fo(t+\tau z) - \fo(t-\tau z)\big) \\
&\hphantom{=}+\frac{1}{2\cc}\big(\dot\fo(t+\tau z)+\dot\fo(t-\tau z)\big),
\end{split}
\end{align}
\end{subequations}
with the constant delay $\tau=v^{-1}$ and the velocity of propagation $v=\sqrt{\ca\cb}$,
can be derived -- for example -- via Laplace transform or the method of characteristics.
Evaluating \eqref{eq:flat_param_w2} at $z=1$ yields
\begin{align}
  \label{eq:example_hccf_bc}
  \uu(t)\! =\!\frac{\cb\tau}{2}\big(\fo(t+\tau) \!- \!\fo(t-\tau)\big) 
\!+\!\frac{1}{2\cc}\big(\dot\fo(t+\tau)\!+\!\dot\fo(t-\tau)\big).
\end{align}
Therefore, the integrator chain of the corresponding \ac{hccf}\footnote{
  From \eqref{eq:example_hccf_bc} follows that $a_m\neq 0$ and therewith $\EA_\scc$
  respectively $\EA$ is Riesz-spectral and a generator of a \CN-semigroup, cf. Section~\ref{sec:hcc_ctrl_design}.
}
has length $N=1$.
As already mentioned in Section~\ref{sec:hcc_ctrl_design} 
a stable delay differential equation of the form
\begin{align}
\label{eq:des_dyn}
\dot\fo(t+\tau)+\muc\,\dot\fo(t-\tau) + \kappac(\fo(t+\tau)+\muc\,\fo(t-\tau)) = 0,
\end{align}
with the design parameters $\kappac > 0$ and $\muc\in(-1,1)\setminus\{0\}$,
can be used for the desired closed-loop dynamics. 
The corresponding feedback
\begin{align}
\begin{split}
\label{eq:ex_ctrl_law}
\uu(t) = \frac{\cb\tau(\muc - 1)}{\muc + 1} \sw_1(1,t) + \frac{\cb\cc\tau - \kappac}{\cc(\muc + 1)}\fo(t+\tau) \\
+\frac{\muc(\cb\cc\tau+\kappac)}{\cc(\muc + 1)}\fo(t-\tau)
\end{split}
\end{align}
can be derived via linear combination of \eqref{eq:flat_param} (evaluated at $z=1$),
and \eqref{eq:des_dyn}, such that the
time derivatives of $\fo(t)$ will be eliminated.
This way, \eqref{eq:ex_ctrl_law} can be rewritten in the form
\begin{align*}
\uu(t) = \BKu\s(t) + \Kb_\sfc\sfc(t), && \s\in \Ss, &&
\sfc \in \Ss_\sfc=\Hn{1}(-\tau,\tau;\Compl),
\end{align*}
with the state $\sfc(t)$, according to \eqref{eq:def_nu_state},
the unbounded operator $\BKu$, defined by
\begin{align*}
  \BKu h = \frac{\cb\tau(\muc - 1)}{\muc + 1} h_1(1), && h=(h_1,h_2, h_3)\in D(\BA)
\end{align*}
and the bounded operator $\Kb_\sfc$, defined by
\begin{align*}
  \Kb_\sfc h &= \frac{\cb\cc\tau - \kappac}{\cc(\muc + 1)}h(\tau)
+\frac{\muc(\cb\cc\tau+\kappac)}{\cc(\muc + 1)}h(-\tau), && h\in\Ss_\sfc,
\end{align*}
cf. \eqref{eq:control_law_sfc}.
As described in Section~\ref{sec:hcc_ctrl_approx}, for the modal approximation scheme
\begin{align*}
  \uu(t) &= \BKu\s(t) + \Kb^n\s(t), &&
  \Kb^n = \sum_{i=1}^{n}\ms_i^\star(t)k_i^\star
\end{align*}
with weights
$\{\ms_i^\star(t)=\scalf{\s(t)}{\evec^{\star*}_{i}}{\Ss}\}$
and feedback gains
\begin{align*}
  \{k_i^\star = \Kb_\sfc\evec_{\sfc,i}^\star = (\Phi\evec_i^\star) \Kb_\sfc e^{\eval^\star_i \theta} = \evec_{i,3}^\star \Kb_\sfc e^{\eval^\star_i \theta}\},
\end{align*}
the transformation $\Trafo_\s^\sfc$ does not need to be determined explicitly.

\begin{figure}
\begin{center}
\includegraphics[width=\linewidth]{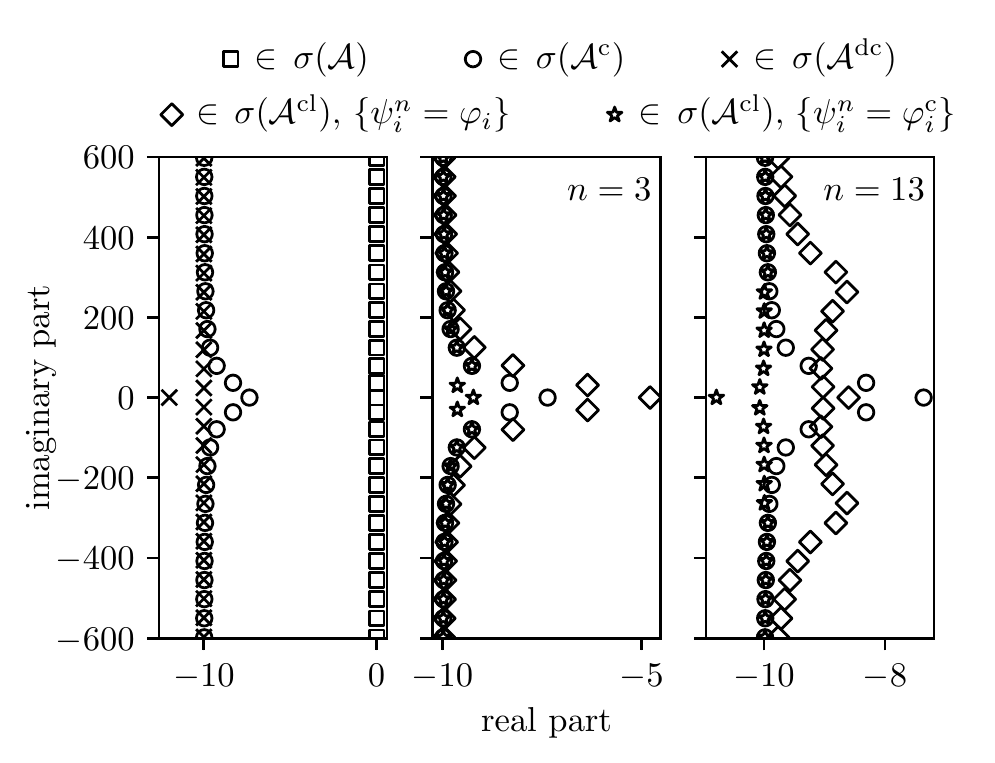}
\end{center}
\caption{Spectra of the different operators related to the state
feedback design and approximation, described in Section~\ref{sec:approx_fb},
for system~\eqref{eq:ex_sys}.}
\label{fig:fb}
\end{figure}

Figure~\ref{fig:fb} shows the spectra, related to the feedback design and
approximation, with the parameters
\begin{align}
\label{eq:ex_params}
\ca=11, && \cb=21, && \cc=31, &&
\muc=e^{-20 \tau}, && \kappac=12.
\end{align}
It can be seen that even a low approximation order of $n=3$ places the closed loop spectrum close to the desired one, provided 
the eigenvectors of the controller intermediate system will be used as approximation basis, i.e.,  $\{\svec_i^n\}_1^n=\{\ctrlIs{\evec}_i\}_1^n$.
In contrast, $n\ge 13$ is required to ensure a similar stability margin when using the eigenvectors of $\EA$ as approximation basis, i.e., $\{\svec_i^n\}_1^n = \{\evec_i\}_1^n$.
In particular, for the choice $\{\svec_i^n\}=\{\ctrlIs{\evec}_i\}$,
the spectrum of the closed-loop system is given by
$\sigma(\clS{\EA})=\sigma(\ctrlIs{\MA}_n+{\MB}_n{\MK}_n^\Transpose)\cup(\sigma(\ctrlIs{\EA})\setminus\{\ctrlIs{\eval}_i\}_1^n)$,
where the matrices are defined by
\begin{align*}
(\ctrlIs{\MA}_n)_{j,i}&=\scal{\ctrlIs{\EA}\ctrlIs{\evec}_i}{\ctrlIs[*]{\evec}_j}, && \ctrlIs{\MA}_n \in \Compl^{n\times n} \\
({\MB}_n)_i&=\ca\,\cconj{\ctrlIs[*]{\evec_{i,1}}(1)}, && \ctrlIs{\MB}_n \in \Compl^{n}  \\
({\MK}_n)_i&=\Kb\ctrlIs{\evec}_i, && \ctrlIs{\MK}_n \in \Compl^{n}.
\end{align*}
Hence, the closed loop spectrum $\sigma(\clS{\EA})$ can be determined by the
calculation of the spectrum of an $n\times n$ matrix, provided that
$\sigma(\ctrlIs{\EA})$ is known.

\subsubsection{Observer design and approximation}
As for the controller design, a desired stable delay differential equation
\begin{align}
\label{eq:des_dyn_obs}
\dot{\yt}(t+\tau)+\muo\,\dot{\yt}(t-\tau) + \kappao(\yt(t+\tau)+\muo\,\yt(t-\tau)) = 0,
\end{align}
for the observer output error $\yt(t)=\yh(t) - \yy(t)$ is prescribed for the observer dynamics,
with design parameters $\kappao > 0$ and $\muo\in(-1,1)\setminus\{0\}$.

According to Section~\ref{sec:approx_obs}, for the observer~\eqref{eq:obs_only_cl_sys_obs},
$\obsIs{\EA}$, $\Lu$ and $\Lb$ have to be determined.
As described in Section~\ref{sec:hcc_obs_approx}, the decomposition
$\Lc=\Lu+\Lb$ is not unique and therewith, $\obsIs{\EA}$ is not unique.
The decomposition $\Lc_\soc = \adverseComp{\Lu}_\soc + \adverseComp{\Lb}_\soc$,
which can directly derived from the observer gain~\eqref{eq:obs_gain_hocf},
can be used to state the observer~\eqref{eq:obs_hat_adverse}.
To avoid the computation of $\Trafo_\soc^\s$ and $\EA_\soc$,
the observer~\eqref{eq:obs_hat_adverse_s} in original coordinates, has to be determined.
But in original coordinates the corresponding decomposition
$\Lc=\adverseComp{\Lu} + \adverseComp{\Lb}$ is inconvenient,
compare Section~\ref{sec:hcc_obs_approx}.
Therefore, in the following, a convenient decomposition $\Lu={\Lu} + \Delta{\Lu}$
will be derived such that it is easy to express the observer intermediate dynamics in
original coordinates: $\obsIs{\EA}$.

Since the desired dynamics of the observer design~\eqref{eq:des_dyn_obs} differ only in parameters
from the desired dynamics of the controller design~\eqref{eq:des_dyn}, the results from
Section~\ref{sec:example_hyp_des_appr} can be used to derive $\Lu$.
From the controller design of the previous section, one knows that
the feedback $\uu(t)=\BKu\s(t)=\frac{\cb\tau(\muc - 1)}{\muc + 1}\BC\s(t)$,
for the boundary control system $\SysOpBcs(\s(t),\uu(t))$,
is an appropriate choice
to realize the unbounded part of the desired dynamics, resp.\ 
the neutral part of the delay differential
equation~\eqref{eq:des_dyn_obs}. Since this feedback gain is just the scaled output operator it can
also be used to realize the
neutral part of the delay differential equation~\eqref{eq:des_dyn_obs} in the
observer intermediate system.
Therewith, $\Lu$ is a scaled input operator of the controller intermediate system (with $\muc=\muo$)
\begin{align*}
  \Lu=\rho\obsIs{\EB}, && \rho=\frac{\cb\tau(\muo - 1)}{\muo + 1},
\end{align*}
where $\obsIs{\EB} = (\ca\,\Dirac{1}(\cdot),0,0)\in D(\obsIs{\EA})'$,
and
\begin{align*}
  D(\obsIs{\EA})=\{h_\s\in D(\BA) \,|\, \obsIs{\BR}h_\s=0 \}, && \obsIs{\BR}=\BR - \BKu|_{\muc=\muo}.
\end{align*}

It remains to determine
\begin{align*}
\obsIs{l}_i&=\scal{\obsIs{\acc} - \desOs{\acc}}{\obsIs[*]{\evec_{\soc,i}}} \\
&=-\cconj{\obsIs[*]{\evec_{\soc,i,2}}(\tau)}-\dualo[\soc]{\desOs{\acc}}{\obsIs[*]{\evec_{\soc,i}}},
\end{align*}
for the observer gain approximation~\eqref{eq:bounded_obs_gain_approx}.
To this end, the desired operator $\desOs{\acc}$
and the transformed adjoint eigenvectors $\{\obsIs[*]{\evec_{\soc,i}}= \Trafo_\s^\soc\obsIs[*]{\evec_{i}}\}_1^n$
have to be computed.
The former can be derived from \eqref{eq:des_dyn_obs} or from the general
formula \cite[Equation~11]{Woittennek2012at}:
\begin{align*}
\desOs{\acc}= \big(\kappao ( 1 + \muo), \kappao + \muo\,\Dirac{\tau}(\cdot)\big) \in D(\obsIs[*]{\EA}_\soc)',
\end{align*}
the latter, known from~\eqref{eq:evec_o_trafos}, are given by
\begin{align*}
\obsIs[*]{\evec_{\soc,i}}=r_i(1,\theta\mapsto e^{\cconj{\obsIs{\eval}_i}\,\theta}\,\cconj{\obsIs{\eval}_i}).
\end{align*}
Therein, $r_i$ has to be adjusted such that
$\Psi_\soc\obsIs[*]{\evec_{\soc,i}}=\Psi\obsIs[*]{\evec_{i}}$, cf. Section~\ref{sec:hcc_obs_approx}.
While, according to~\eqref{eq:hccf_flat_out},
\begin{align*}
&&\foa(t)= \Psi_\soc \xi(t) = \xi_{1}(t)+ \int_{\thu}^0 \xi_{2}(\theta,t)\,d\theta,&& \\
&&\xi(t)=(\xi_1(t),\xi_2(\cdot,t))\in\Ss_\soc,&&
\end{align*}
the corresponding map $\Psi$, which allows to compute the flat output of
the adjoint system in original coordinates, remains to be determined.
To this end, the adjoint system
$\dot\s^*(t) = \obsIs[*]{\SysOpBos}(\s^*(t), \yy^*(t))$
with state $\s^*(t)=(\sw_1^*(\cdot,t),\sw_2^*(\cdot,t),\sw_3^*(t))\in\Ss$
is considered in the form
\begin{align*}
\diff{t}\sw^*_1(z,t) &= -\cb \diff{z}\sw^*_2(z,t) \\
\diff{t}\sw^*_2(z,t) &= -\ca \diff{z}\sw^*_1(z,t) \\
\diff{t}\sw^*_3(t) &= -\cb \sw^*_2(0,t)
\end{align*}
with boundary conditions
\begin{align*}
\cc\sw^*_3(t) &= \ca\sw^*_1(0,t) \\
\yy^*(t) &= -\ca\rho\,\sw^*_1(1,t) - \cb\sw^*_2(1,t)
\end{align*}
and input $\yy^*(t)$ (cf. Remark~\ref{rem:det_dual_flat_outs}).
A flat output of the adjoint system is given by $\sw_3^*(t)$.
By looking at the flat parametrization of the input\footnote{
The analysis of the adjoint system can be traced back to the
already performed analysis of the original system, by applying the
state transformation $\check\sw_1^*(\cdot,t)=\ca\sw_1^*(\cdot,t)$, $\check\sw_2^*(\cdot,t)=-\cb\sw_2^*(\cdot,t)$ and $\check\sw_3^*(t)=\cc\sw_3^*(t)$.}
\begin{align}
  \label{eq:flat_param_dual_in}
  \begin{split}
  \yy^*(t) &= \frac{\cb\tau - \rho}{2\cb\tau}\dot\sw_3^*(t+\tau)+\frac{\cb\tau + \rho}{2\cb\tau}\dot\sw_3^*(t-\tau) \\
&\hphantom{=}+\cc\,\frac{\cb\tau-\rho}{2}\sw_3^*(t+\tau) - \cc\,\frac{\cb\tau+\rho}{2}\sw_3^*(t-\tau),
  \end{split}
\end{align}
it becomes clear that
\begin{align*}
\foa(t)=\Psi\s^*(t)=\frac{\cb\tau - \rho}{2\cb\tau}\sw_3^*(t).
\end{align*}
The scaling in the above definition of $\Psi$
emerges from a comparison of coefficients
of the first term in \eqref{eq:flat_param_dual_in} with the first term in \eqref{eq:hccf_bc_bc}.
Therefore, the transformed system, with the state according to \eqref{eq:conn_scc_scco} and \eqref{eq:conn_fc_cc}, appears in the \ac{hccf}.

Now $\{\obsIs[*]{\evec_{\soc,i}}\}_1^n$ can be adjusted via $\{r_i\}_1^n$,
such that
\begin{align*}
  r_i+ r_i\int_{\thu}^0 e^{\cconj{\obsIs{\eval}_i}\,\theta}\,\cconj{\obsIs{\eval}_i}\,d\theta=
  \frac{\cb\tau - \rho}{2\cb\tau}\obsIs[*]{\evec_{\soc,i,3}}
\end{align*}
and, therewith, also the approximation $\Lb^n$ can be computed.

The eigenvalue distribution of
$\sigma(\obsIs{\EA}+\Lb^n\obsIs{\EC})$
follows the same rules as the eigenvalue distribution of
$\sigma(\ctrlIs{\EA}+\ctrlIs{\EB}\Kb^n)$
from in Section~\ref{sec:example_hyp_des_appr}, see Figure~\ref{fig:fb}.

\section{Conclusion}
\label{sec:conclusion}

Late-lumping feedback and observer design for infinite-dimensional linear
systems with unbounded input and output operators are considered. The proposed
approximation schemes are inspired by \cite{Woittennek2017ifac,Riesmeier2018cdc}
and rely on a decomposition of the feedback and the output-injection gains into
a bounded and an unbounded part.  The approximation applies to the bounded part
only, while the unbounded part is assumed to allow for an exact realization.

Spectral convergence results for the closed-loop system operator, obtained with
the proposed approximation schemes, are provided.  The result relies on a set of
spectral assumptions provided in~\cite{XuSallet1996siam}. These assumptions
mainly concern the Riesz-spectral property of the system operator, the
eigenvalue distribution, and the modal expansion of the unbounded input
operator.

The problem under consideration is formulated such that the spectral assumptions
don't need to be checked for the given control system but rather for the desired
closed-loop system. As a consequence, the proposed design scheme can be applied
to systems, which do not completely satisfy the given assumptions, as long as
the desired closed-loop system does.

With the \ac{ac} and the \ac{hc} two important system classes have been studied
in more detail. For these systems, the assumptions required for the application
of the obtained results can be easily checked.  Moreover, for the \ac{hc}
particular target dynamics are proposed ensuring that these assumptions are
satisfied.  Furthermore, the controller and observer design is explained in
detail for the \ac{hc} and applied to an illustrative example.

Within the contribution, only observer gain approximation is considered, while
both observer approximation and finite-dimensional observer-based output feedback,
as done in~\cite{Deutscher2013IJC,GrueneMeurer2022} for an early-lumping design,
are not
treated. Although the underlying transformation-based techniques are well
suited for the design of observer-based output feedback, the convergence of the complete system dynamics
involving both feedback and observer approximation is left open for future
research.

\bibliographystyle{plain}

\begin{thebibliography}{10}

\bibitem{Auriol2019aut}
J.~Auriol, K.~A. Morris, and F.~{Di Meglio}.
\newblock Late-lumping backstepping control of partial differential equations.
\newblock {\em Automatica}, 100:247--259, 2019.

\bibitem{BensoussanPratoDelfourMitter2007}
A.~Bensoussan, G.~D. da~Prato, M.~C. Delfour, and S.~K. Mitter.
\newblock {\em Representation and control of infinite dimensional systems}.
\newblock {S}ystems {C}ontrol: {F}oundations {A}pplications. Birkh{\"a}user,
  Boston, 2 edition, 2007.

\bibitem{CurZwa95}
R.~F. Curtain and H.~Zwart.
\newblock {\em An Introduction to Infinite-Dimensional Linear Systems Theory}.
\newblock Springer-Verlag, New York, NY, 1995.

\bibitem{Deutscher2013IJC}
J.~Deutscher.
\newblock Finite-dimensional dual state feedback control of linear boundary
  control systems.
\newblock {\em International Journal of Control}, 86:41--53, 2013.

\bibitem{DunfordSchwartz3}
N.~Dunford and J.~T. Schwartz.
\newblock {\em Linear operators. Part 3: Spectral operators}.
\newblock Wiley. Wiley, New York, 1971.

\bibitem{EcklebeEtal2017}
S.~Ecklebe, M.~Riesmeier, and F.~Woittennek.
\newblock Approximation and implementation of transformation based feedback
  laws for distributed parameter systems.
\newblock {\em PAMM}, 17(1):785--786, 2017.

\bibitem{Fattorini1968siam}
H.~Fattorini.
\newblock Boundary control systems.
\newblock {\em SIAM J.\ Control}, 6, 1968.

\bibitem{Gehring2018mathmod}
N.~Gehring and R.~Kern.
\newblock {Flatness-based tracking control for a pneumatic system with
  distributed parameters}.
\newblock In {\em {Proc. 9th Vienna Int. Conf. Math. Mod. (MATHMOD)}}, pages
  527--532, 2018.

\bibitem{GehringWoittennek2021}
N.~Gehring and F.~Woittennek.
\newblock Flatness-based output feedback tracking control of a hyperbolic
  distributed-parameter system.
\newblock {\em IEEE Control Systems Letters}, PP:1--1, 06 2021.

\bibitem{GrueneMeurer2022}
L.~Grüne and T.~Meurer.
\newblock Finite-dimensional output stabilization for a class of linear
  distributed parameter systems — a small-gain approach.
\newblock {\em Systems \& Control Letters}, 164:105237, 2022.

\bibitem{GuoZwart2001report}
B.~Z Guo and H.~Zwart.
\newblock {R}iesz spectral systems.
\newblock Technical report, University Twente, 2001.
\newblock Memorandum No. 1594.

\bibitem{Harkort2014}
C.~Harkort.
\newblock {\em Early-Lumping Based Controller Synthesis for Linear
  Infinite-Dimensional Systems}.
\newblock PhD thesis, University Erlangen-Nuremberg, 06 2014.

\bibitem{HoRussel1983}
L.~Ho and D.~Russell.
\newblock Admissible input elements for systems in hilbert space and a carleson
  measure criterion.
\newblock {\em SIAM J. Control Optim.}, 21:614--640, 07 1983.

\bibitem{KernEtal2018}
R.~Kern, N.~Gehring, J.~Deutscher, and M.~Meißner.
\newblock Design and experimental validation of an output feedback controller
  for a pneumatic system with distributed parameters.
\newblock 10 2018.

\bibitem{Krstic2008bcp}
M.~Krstic and A.~Smyshlyaev.
\newblock {\em {Boundary Control of Pdes: A Course on Backstepping Designs}}.
\newblock SIAM, PA, USA, 2008.

\bibitem{MichielsNiculescu2007}
W.~Michiels and S.-I. Niculescu.
\newblock {\em Stability and Stabilization of Time-Delay Systems}.
\newblock Society for Industrial and Applied Mathematics, 01 2007.

\bibitem{Rabah2003crm}
R.~Rabah, G.~M. Sklyar, and A.~V. Rezounenko.
\newblock Generalized {R}iesz basis property in the analysis of neutral type
  systems.
\newblock {\em Comptes Rendus Mathematique}, 337(1):19--24, 2003.

\bibitem{Rebarber1989ieee}
R.L. Rebarber.
\newblock Spectral determination for a cantilever beam.
\newblock {\em IEEE Transactions on Automatic Control}, 34(5):502--510, 1989.

\bibitem{Riesmeier2018cdc}
M.~Riesmeier and F.~Woittennek.
\newblock On approximation of backstepping observers for parabolic systems with
  {R}obin boundary conditions.
\newblock In {\em Proceedings of the 57th IEEE International Conference on
  Decision and Control}, Miami, Fl, USA, 2018.

\bibitem{Riesmeier2021pamm}
M.~Riesmeier and F.~Woittennek.
\newblock A modal approach to late-lumping of transformation-based observer
  designs.
\newblock {\em PAMM}, 20(1):e202000287, 2021.

\bibitem{Russell1991jiea}
D.~L. Russell.
\newblock Neutral {FDE} canonical representations of hyperbolic systems.
\newblock {\em Journal of Integral Equations and Applications}, 3(1):129--166,
  1991.

\bibitem{TucsnakWeiss2009}
M.~Tucsnak and G.~Weiss.
\newblock {\em Observation and Control for Operator Semigroups}, volume~11.
\newblock 01 2009.

\bibitem{Villegas2007Phd}
J.~A. Villegas.
\newblock {\em A Port-Hamiltonian Approach to Distributed Parameter Systems}.
\newblock PhD thesis, University of Twente, 2007.

\bibitem{Woittennek2012at}
F.~Woittennek.
\newblock {B}eobachterbasierte {Z}ustandsr\"uckf\"uhrungen f\"ur hyperbolische
  verteiltparametrische {S}ysteme.
\newblock {\em at~--~Automatisierungstechnik}, 60(8):462--474, 2012.

\bibitem{Woittennek2012at_eng}
F.~Woittennek.
\newblock Observer based state feedback design for hyperbolic distributed
  parameter systems.
\newblock {\em at~--~Automatisierungstechnik}, 60(8):462--474, 2012.

\bibitem{Woittennek2013cpde}
F.~Woittennek.
\newblock Flatness based feedback design for hyperbolic distributed parameter
  systems with spatially varying coefficients.
\newblock In {\em Proc.~1.\ IFAC Workshop on Control Systems Modelled by
  Partial Differential Equations}, Paris, 2013.

\bibitem{Woittennek2013nds}
F.~Woittennek.
\newblock On the hyperbolic observer canonical form.
\newblock In {\em Proc.~8th Int.~Workshop on Multidimensional (nD) Systems},
  pages 1--6, Erlangen Germany, 2013.

\bibitem{Woittennek2017ifac}
F.~Woittennek, M.~Riesmeier, and S.~Ecklebe.
\newblock On approximation and implementation of transformation based feedback
  laws for distributed parameter systems.
\newblock {\em IFAC-PapersOnLine}, 50(1):6786 -- 6792, 2017.
\newblock 20th IFAC World Congress.

\bibitem{WoittennekRudolph2012mathmod}
F.~Woittennek and J.~Rudolph.
\newblock Controller canonical forms and flatness based state feedback for 1d
  hyperbolic systems.
\newblock In {\em Proc.\ 7th Vienna Int. Conf. Math. Mod. (MathMod)}, volume~7,
  pages 792--797, 2012.

\bibitem{WoittennekWang2014ifac}
F.~Woittennek, S.~Wang, and T.~Knüppel.
\newblock Backstepping design for parabolic systems with in-domain actuation
  and {R}obin boundary conditions.
\newblock In {\em Proc.~19. IFAC World Congress 2014}, 2014.
\newblock 5175-5180.

\bibitem{XuSallet1996siam}
C.-Z. Xu and G.~Sallet.
\newblock On spectrum and {R}iesz basis assignment of infinite-dimensional
  linear systems by bounded linear feedbacks.
\newblock {\em SIAM Journal on Control and Optimization}, 2:1905 -- 1910, 01
  1996.

\bibitem{XuFeng2001}
G.-Q. Xu and D.-X. Feng.
\newblock On the spectrum determined growth assumption and the perturbation of
  {C}0 semigroups.
\newblock {\em Integral Equations and Operator Theory}, 39(3):363--376, 2001.

\end{thebibliography}

\end{document}